\newcommand{\natNum}{\mathbb{N}}
\newcommand{\realNum}{\mathbb{R}}
\newcommand{\complNum}{\mathbb{C}}
\newcommand{\posRealNum}{[0,\infty)}
\newcommand{\semiGroup}[1]{\mathbb{T}(#1)}
\newcommand{\semiGroupDef}{\mathbb{T} = \left( \semiGroup{t} \right)_{t \geq 0}}
\newcommand{\semiGroupEx}[1]{\mathbb{T}_{-1}(#1)}
\newcommand{\loc}{\mathrm{loc}}
\newcommand{\andMath}{\textrm{and}}
\newcommand{\realPart}[1]{\mathrm{Re}\left( #1 \right)}
\newcommand{\set}[1]{\left\lbrace #1 \right\rbrace}
\newcommand{\innerProd}[2]{\left\langle #1 , #2 \right\rangle}
\newcommand{\boundedOp}[2]{\mathcal{L} \left( #1 , #2 \right)}
\newcommand{\boundedOpSelf}[1]{\mathcal{L} \left( #1  \right)}
\newcommand{\lpSpaceDT}[3]{L^{#1}\left(#2, #3\right)}
\newcommand{\lpSpacelocDT}[3]{L^{#1}_{\loc}\left(#2, #3\right)}
\newcommand{\CinfComp}[2]{C^\infty_c\left(#1, #2\right)}
\newcommand{\laplaceTr}[1]{\mathcal{L}\left\lbrace #1 \right\rbrace}
\newcommand{\intd}[1]{\,\mathrm{d}#1}
\newcommand{\derOrd}[3]{\frac{\mathrm{d}^{#3}#1}{\mathrm{d}#2^{#3}}}
\newcommand{\inv}[1]{{#1}^{-1}}
\newcommand{\systemNode}[4]{\Sigma(#1,#2,#3,#4)}
\newcommand{\systemNodeSt}{\systemNode{A}{B}{C}{\mathbf{G}}}
\newcommand{\systemNodeStD}{\systemNode{A^*}{C^*}{B^*}{\overline{\mathbf{G}}(\overline{\mkern2mu\cdot\mkern2mu})}}
\newcommand{\CandD}{C \& D}
\newcommand{\resolvent}[2]{\left( #1 \mathbb{I} - #2 \right)^{-1}}
\newcommand{\dom}{\mathrm{D}}
\newcommand{\idMatrix}[1]{\mathbbm{1}_{#1}}
\newtheorem{proposition}{Proposition}[section]
\newtheorem{lemma}[proposition]{Lemma}
\newtheorem{theorem}[proposition]{Theorem}
\newtheorem{corollary}[proposition]{Corollary}
\theoremstyle{definition}
\newtheorem{definition}[proposition]{Definition}
\newtheorem{example}[proposition]{Example}
\theoremstyle{remark}
\newtheorem{remark}[proposition]{Remark}
\begin{document}

    \title{A dual notion to BIBO stability}
    \date{Submitted to the editors on August 11, 2024.}

\author{Felix L. Schwenninger}
\address{Department~of~Applied~Mathematics,~University~of~Twente, P.O.~Box~217, 7500~AE Enschede, The~Netherlands}
\email{f.l.schwenninger@utwente.nl}

\author[Alexander A. Wierzba]{Alexander A. Wierzba*}
\address{Department~of~Applied~Mathematics,~University~of~Twente, P.O.~Box~217, 7500~AE Enschede, The~Netherlands}
\email{a.a.wierzba@utwente.nl}

\thanks{*Corresponding author}
\thanks{Wierzba is supported by the Theme Team project ``Predictive Avatar Control and Feedback'' sponsored by the Faculty of Eletrical Engineering, Computer Science and Mathematics.}
    
\begin{abstract}
    In this paper we consider BIBO stability of infinite-dimensional linear state-space systems and the related notion of $L^1$-to-$L^1$ input-output stability (abbreviated \emph{LILO}).
    We show that in the case of finite-dimensional input and output spaces, both are equivalent and preserved under duality transformations.
    In the general case, neither of these properties is satisfied, but BIBO and LILO stability remain dual to each other.
\end{abstract}

\keywords{BIBO stability, infinite-dimensional system, input-output stability, system node, admissible operator}

\maketitle

\section{Introduction}
While {bounded-input-bounded-output stability (BIBO)} is a classical notion in the control of finite-dimensional systems \cite{b_Kailath80_LinearSystems, b_Vidyasagar93_NonlinearSystemsAnalysis, b_PoldermanWillems98_SystemsTheory} its infinite-dimensional counterpart has been explored much less. This may stem from the fact that infinite-dimensional systems theory is most commonly studied in the context of operator theory on Hilbert spaces \cite{b_CurtainZwart2020,b_TucsnakWeiss2009} --- rather than on general Banach spaces ---, for which input-output stability can for instance be cast in $L^{2}$-wellposedness, \cite{b_Staffans2005}. Yet, estimates for the input and/or output functions in supremum type norms are relevant; as they naturally appear in e.g.\ funnel control \cite{a_BergerIlchmannRyanFunnel2021} and input-to-state stability \cite{b_KarafyllisKristic18_ISSforPDEs,a_MironchenkoPrieur20ISSSurvey} also in particular in the setting of infinite-dimensional state space systems \cite{a_BergerPucheSchwenningerFunnel2020}.

This note complements the recent systematic study of BIBO stability for infinite-dimensional linear systems in \cite{a_SchwenningerWierzbaZwart24BIBO}  by addressing the (formally dual) situation where input and output are measured in $L^{1}$-norms.

Our interest in this setting is not artificial as it arises from two goals:
First, to find sufficient conditions for BIBO stability and second to understand what happens to BIBO stability when considering dual systems. 

The challenge of assessing BIBO stability for linear infinite-dimensional systems is already present in the case where the input and output spaces are finite-dimensional, see e.g.\ \cite{a_BergerPucheSchwenningerFunnel2020}. The ad-hoc technique used there relies on showing that a given transfer transfer function is the Laplace transform of a measure of bounded variation. The current exclusion of infinite-dimensional input and output spaces is restrictive in cases and thus a full investigation and characterisation of the general setting is desirable. In light of this, both the study of the relationship between BIBO stability and the corresponding $L^1$-to-$L^1$ stability notion as well as of the behaviour of BIBO stability under duality transformations are also part of the larger program tackling this general question.

In the remainder of the introduction we take the liberty to showcase the main questions addressed in this paper by relating to the (trivial) finite-dimensional situation.

For a  finite-dimensional LTI system in state space form, i.e. $\Sigma(A,B,C,D)$,
\begin{equation}
\label{eq:formalStateSpaceEquations}
\begin{array}{ccc}
\dot{x}(t)&=&Ax(t)+Bu(t)\\
y(t)&=&Cx(t)+Du(t)\end{array}\quad t\ge0,
\end{equation}
with matrices $A\in\complNum^{n_X \times n_X}$, $B\in\complNum^{n_X \times n_U}$, $C\in\complNum^{n_Y \times n_X}$ and $D\in\complNum^{n_Y \times n_U}$, BIBO stability is equivalent to  absolute integrability of the (matrix-valued) function  $t \mapsto C e^{At} B$.

If one instead considers distributed parameter systems such as arising in the modelling of wave or flow phenomena using partial differential equations, one encounters systems which are still formally of the form~\eqref{eq:formalStateSpaceEquations} but allow $x$, $u$ and $y$ to take values in some infinite-dimensional Banach spaces. This generalisation results in a more subtle situation in which already defining BIBO stability becomes more difficult \cite{a_SchwenningerWierzbaZwart24BIBO}. In particular there are conclusions from the finite-dimensional case that do not carry over to the infinite-dimensional situation. For example, exponential stability implies BIBO stability in finite dimensions, but not in infinite-dimensional state spaces in general. Note that the expression $C e^{At} B$ will not be well-defined in the context of controls and observations acting on spatial boundaries as the operators $C$ and $B$ fail to be bounded.

Another observation in the finite-dimensional case is that the notion of BIBO stability is preserved under \emph{duality transforms}, that is the \emph{dual system} $\Sigma(A^{*},C^{*},B^{*},D^{*})$
is BIBO stable if and only if $\Sigma(A,B,C,D)$ is. 
We show that the corresponding statement in infinite-dimensions is true if the input and the output spaces are finite-dimensional, see Proposition~\ref{prop:finiteDimSpacesBIBODuality}, and provide a counterexample in the general case.

Studying the relation of BIBO stability and duality transforms furthermore directly leads to questions regarding the connection between BIBO stability and the analogous concept of $L^1$-to-$L^1$ input-output stability (or short \emph{LILO} stability). In fact, it will turn out that in the case of finite-dimensional input and output spaces these two stability notions are equivalent, see Corollary~\ref{cor:BIBOLILOequivalence}.

While we will provide a counterexample showing that also this equivalence fails in the general case, we will see that LILO stability is in fact the appropriate dual notion to BIBO stability in the view of a system and its dual system, see Theorems~\ref{thm:CinfLILOImplDualLInfBIBO}~and~\ref{thm:LinfBIBODualCInfLILO}. 

\subsection{Notation}
For any $\alpha \in \realNum$, let $\complNum_\alpha := \left\lbrace z \in \complNum \;\middle|\; \realPart{z} > \alpha \right\rbrace$. All Banach spaces are considered over the field of complex numbers. Let $\boundedOp{X}{Y}$ be the set of bounded operators from a Banach space $X$ to a Banach space $Y$ and denote by $\dom(A)$ the domain of an unbounded operator $A$.

For a measure $h$, let $\laplaceTr{h}$ denote its Laplace transform if it exists (on some right half-plane).

Let $\mathcal{M}(\posRealNum, \mathbb{K}^{n \times m})$ denote the set of Borel measures of bounded total variation on $\posRealNum$ with values in $\mathbb{K}^{n \times m}$. 
Furthermore, for $h \in \mathcal{M}(\posRealNum, \mathbb{K}^{n \times m})$ let $\| h \|_{\mathcal{M}}$ denote the total variation of $h$ \cite[Sec.~3.2]{b_GripenbergLondenStaffans1990}.

For any $1 \leq p \leq \infty$, let $1 \leq p' \leq \infty$ be the real number satisfying $\frac{1}{p} + \frac{1}{p'} = 1$.

\section{System nodes and BIBO stability}
We begin by recalling the notion of system nodes in the sense of Staffans \cite{b_Staffans2005}\cite[Def.~4.1]{a_TucsnakWeissWellPosed2014} which provide  a formally rigorous extension of systems of the form \eqref{eq:formalStateSpaceEquations} to the case of infinite-dimensional state spaces, followed by the definition of BIBO stability for these types of systems.

Let $U$, $X$ and $Y$ be Banach spaces. Then a \emph{system node} on $(U,X,Y)$ is a collection $\Sigma(A,B,C,\mathbf{G})$ where $A: \dom(A) \subset X \rightarrow X$ is the generator of a strongly continuous semigroup $\semiGroupDef$ on $X$ with growth bound $\omega(\mathbb{T})$, $B \in \boundedOp{U}{X_{-1}}$, $C \in \boundedOp{X_1}{Y}$ and $\mathbf{G}: \complNum_{\omega(\mathbb{T})} \rightarrow \boundedOp{U}{Y}$ is an analytic function satisfying for all $\alpha, \beta \in \complNum_{\omega(\mathbb{T})}$
    \begin{equation}
    \label{eq:transferFunctionDifferenceRelation}
        \mathbf{G}(\alpha) - \mathbf{G}(\beta) = C\left[ \resolvent{\alpha}{A} - \resolvent{\beta}{A} \right] B,
    \end{equation}
where $X_1$ is defined as the space $\dom(A)$ with the norm $\|x\|_{X_1} := \| (\beta \mathbb{I} - A) x \|_X$ with $\beta \in \rho(A)$ and $X_{-1}$ as the completion of $X$ with respect to the norm $\|x\|_{X_{-1}} := \| \resolvent{\beta}{A} x \|_X$ again with $\beta \in \rho(A)$.

 The spaces $U$, $X$ and $Y$ are usually referred to as \emph{input space}, \emph{state space} and \emph{output space}, respectively. Furthermore, $B$ is called the \emph{input operator}, $C$ the \emph{output operator} and $\mathbf{G}$ the \emph{transfer function} of the system node. 

The well-defined form of Equations \eqref{eq:formalStateSpaceEquations} is then given by
\begin{equation}
\label{eq:systemNodeDifferentialEqns}
    \begin{split}
        \Dot{x}(t) &= A x(t) + B u(t) \\
        y(t) &= C\&D \begin{bmatrix} x(t) \\ u(t) \end{bmatrix},
    \end{split}
\end{equation}
where $C \& D: \dom(C \& D) \subset X \times U \rightarrow Y$ is the \emph{combined observation/feedthrough operator} defined as
\begin{equation}
\label{eq:defCandDOperator}
    C\&D \begin{bmatrix} x \\ u \end{bmatrix} := C \left[ x - \resolvent{\beta}{A} B u \right] + \mathbf{G}(\beta) u,
\end{equation}
for some $\beta \in \complNum_{\omega(\mathbb{T})}$ and with domain $\dom(C\&D) = \set{ \left[ \begin{smallmatrix} x \\ u \end{smallmatrix} \right] \in X \times U \middle| Ax + Bu \in X }$. 

Let $\mathcal{F}$ be a placeholder for ``$L^p$'' and ''$C$'', meaning that e.g.\ $\mathcal{F}([0,t],X)$ refers to the space of $X$-valued $L^{p}$ or continuous functions on the interval $[0,t]$, respectively. If there exists a constant $c > 0$ such that for some $t > 0$ and any $x \in X_1$ we have that $\left\| \mathbbm{1}_{[0,t]} C \semiGroup{\cdot} x \right\|_{\mathcal{F}([0,t],Y)} \leq c \| x \|_X$ then $C$ is called \emph{$\mathcal{F}$-observation-admissible}. On the other hand, an operator $B$ is called \emph{$\mathcal{F}$-control-admissible} if for some (hence all) $t > 0$ and any $u:[0,t]\to U \in \mathcal{F}([0,t],Y)$ we have that $\int_0^t \semiGroup{t - s} B u(s) \intd s \in X$, where identify the semigroup with its extension to $X_{-1}$ and we emphasize that the integral is defined in $X_{-1}$. The latter property readily implies that there exists $c > 0$ such that $\| \int_0^t \semiGroup{t - s} B u(s) \intd s \|_X \leq c \| u \|_{\mathcal{F}([0,t],Y)}$ for all $u$. In case that either inequality in the definition of admissible observation and control operators holds with the same constant $c$ for all $t > 0$, the respective operator is called \emph{infinite-time $\mathcal{F}$-admissible}.

If $\Sigma(A,B,C,\mathbf{G})$ is a system node and the Banach space adjoint $A^*$ is again the generator of a strongly continuous semigroup $\mathbb{T}^*$, then we call
$\systemNodeStD$ the \emph{dual} system node to $\Sigma(A,B,C,\mathbf{G})$. This provides the infinite-dimensional generalisation of the dual system in finite dimensions. 

Given Equation \eqref{eq:systemNodeDifferentialEqns} we can then consider two different types of solutions to a system node $\Sigma(A,B,C,\mathbf{G})$. 

First, there are \emph{classical solutions} \cite[Def.~4.2]{a_TucsnakWeissWellPosed2014}, that is triples $(u,x,y)$ of functions $u \in C([0,\infty),U)$, $x \in C^1([0,\infty),X)$ and $y \in C([0,\infty),Y)$ such that $\left[ \begin{smallmatrix}
    x(t) \\u(t)
\end{smallmatrix} \right] \in \dom(C\&D)$ for all $t \geq 0$ and Equations~\eqref{eq:systemNodeDifferentialEqns} holds for all $t \geq 0$.

Second, we have  \emph{generalised solutions in the distributional sense}, that is triples $(u,x,y)$, where $u \in L^1_{\loc}([0,\infty), U)$ and $x \in C([0,\infty), X_{-1})$ are functions with the latter given in terms of $\mathbb{T}_{-1}$, the unique extension of $\mathbb{T}$ to the space $X_{-1}$, by $x(t) = \semiGroup{t} x_0 + \int_0^t \mathbb{T}_{-1}\left( t - s \right) B u(s) \intd{s}$ for some $x_0 \in X$ and all $t \geq 0$, and $y$ is the $Y$-valued distribution given distributionally as 
    \begin{equation}
    \label{eq:distributionalOutputDef}
    y(t) = \derOrd{}{t}{2} \left( \left( C\&D \right) \int_0^t (t - s) \begin{bmatrix} x(s) \\ u(s) \end{bmatrix} \intd{s} \right), \quad t \geq 0. 
    \end{equation}

The existence of classical solutions is guaranteed at least for all $u \in W^{2,1}_{\loc}(\posRealNum, U)$ and initial value $x(0) \in X$ such that $\left[ \begin{smallmatrix}
    x(0) \\ u(0)
\end{smallmatrix} \right] \in \dom(C \& D)$ by \cite[Lem.~4.7.8]{b_Staffans2005}, whereas generalized solutions exist for any $u \in \lpSpacelocDT{1}{\posRealNum}{U}$ and initial value $x_0 \in X$ by \cite[Thm.~3.8.2(i)~\&~Lem.~4.7.9]{b_Staffans2005}. Furthermore, clearly any classical solution is also a generalised solution of the system node.

Following \cite[Def.~3.1~\&~Def.~3.2]{a_SchwenningerWierzbaZwart24BIBO}, we define three variants of BIBO stability depending on the solution concept employed.
\begin{definition}
\label{def:BIBOStability}
A system node $\Sigma(A,B,C,\mathbf{G})$ is called $C^ \infty$- ($C$-, $L^\infty$-)\emph{BIBO stable} if there exists $c > 0$ such that for any classical (generalised distributional) solution $(u,x,y)$ with $u \in C^\infty_c\left( (0,\infty), U \right)$ ($u \in C\left( [0,\infty), U \right)$, $u \in L^\infty_\loc(\posRealNum, U)$) and $x(0) = 0$ we have that $y \in L^\infty_\loc(\posRealNum, Y)$ and for all $t > 0$
\begin{equation}
\label{eq:BIBOInequality}
    \| y \|_{\lpSpaceDT{\infty}{[0,t]}{Y}} \leq c \| u \|_{\lpSpaceDT{\infty}{[0,t]}{U}}.
\end{equation}
\end{definition}
Note that the definition of $C$-BIBO stability does not require the output of the system itself to be a continuous function for any $u \in C\left( [0,\infty), U \right)$, nor that the corresponding solutions $(u,x,y)$ are classical.   

It has been shown that for the case that the input and the output space both are finite-dimensional (a particular case that we will refer to as \emph{SISO case} in the following\footnote{Short for \emph{single-input-single-output}.}) the three notions of BIBO stability are actually equivalent and are fully characterised by the transfer function $\mathbf{G}$ being the Laplace transform of a measure of bounded total variation, see \cite[Thm.~3.4~\&~Thm.~3.5]{a_SchwenningerWierzbaZwart24BIBO} where however the term $C$-BIBO stability was not explicitly introduced.

\section{BIBO stability under duality}
\label{sec:BIBOduality}
We now study the question raised in the introduction about when BIBO stability is preserved if the dual system is considered, i.e.\ when is $\Sigma(A^*,C^*,B^*,\overline{\mathbf{G}}(\overline{\,\cdot\,}))$ BIBO stable if we know that $\Sigma(A,B,C,\mathbf{G})$ is?

It is easy to see that for general system nodes this cannot be true. 
\begin{example}
\label{ex:dualityCounterexample}
Consider the system on the state space $X = \lpSpaceDT{2}{[0,1]}{\realNum}$ with $U = \realNum$ and $Y = X$, given by 
\begin{align}
    \Dot{x} (\xi, t) &= - \derOrd{}{\xi}{} x(\xi,t), &
    u(t) &= x(0,t), &
    y(t) &= x(\cdot, t),
\end{align}
which is a transport equation on the interval $[0,1]$ with boundary control at $0$ and the identity as observation. This is a system node where $A = - \derOrd{}{\xi}{}$ is the generator of the right-shift semigroup on the interval $[0,1]$ with $\dom(A) = \{ x \in H^1\left( [0,1], \realNum \right) : x(0) = 0 \} \subset \lpSpaceDT{2}{[0,1]}{\realNum}$, $B = \delta_0 \in W^{-1,2}([a,b],\realNum)$ and $C = \mathbb{I}$. 

The state of this system, in the sense of generalised solutions as defined above, 
can for any $u \in \lpSpacelocDT{\infty}{\posRealNum}{\realNum}$ be explicitly given as
\begin{align}
    x(\xi ,t) = \begin{cases}
        u(t - \xi) & t - \xi \geq 0 \\
        0 & t - \xi < 0
    \end{cases}
\end{align}
and thus we have in particular that the system is $L^\infty$-BIBO stable, as
\begin{align}
    \| y(t) \|^2_{\lpSpaceDT{2}{[0,1]}{\realNum}} = \int_0^1 \| x(\xi ,t) \|^2 \intd \xi \leq \| u \|^2_{\lpSpaceDT{\infty}{[0,t]}{\realNum}}.
\end{align}

The dual system is then given by
\begin{align}
    \Dot{x} (\xi, t) &= \derOrd{}{\xi}{} x(\xi,t) + u(\xi, t), &
    y(t) &= x(0,t)
\end{align}
that is by the left-shift on $[0,1]$ with boundary observation at $0$ and identity input. Again we can for any $u \in \lpSpacelocDT{\infty}{\posRealNum}{X}$ explicitly write down the system state as
\begin{align}
    x(\xi , t) = \int_{\xi + t - 1}^t u(\xi + (t - s) , s) \intd s
\end{align}
and hence have in particular
\begin{align}
    y(t) = \int_{t - 1}^t u(t - s , s) \intd s = \int_{0}^1 u(r , t-r) \intd r.
\end{align}
Consider then the sequence of functions
\begin{align}
    u_\epsilon(\xi, t) = \mathbbm{1}_{[0,1]}(t) \mathbbm{1}_{[1 - t - \frac{\epsilon}{2},1 - t + \frac{\epsilon}{2}]}(\xi)
\end{align}
for which 
while the corresponding output $y_\epsilon$ satisfies
\begin{align}
    y_\epsilon(1) &= \int_{0}^1 u_\epsilon(r , 1-r) \intd r = \int_0^1 \mathbbm{1}_{[0,1]}(1-r) \mathbbm{1}_{[r - \frac{\epsilon}{2},r + \frac{\epsilon}{2}]}(r) \intd r = 1.
\end{align}
Hence $\frac{\| y_\epsilon \|_{\lpSpaceDT{\infty}{[0,1]}{\realNum}}}{\| u_\epsilon \|_{\lpSpaceDT{\infty}{[0,1]}{\lpSpaceDT{2}{[0,1]}{\realNum}}}} \geq \frac{1}{\sqrt{\epsilon}}$
tends to infinity as $\epsilon \rightarrow 0$ and thus the system cannot be BIBO stable.
\end{example}
\begin{remark}
    That BIBO stability is not preserved in general under a duality transformation was also shown non-constructively in \cite[Rem.~6.6]{a_SchwenningerWierzbaZwart24BIBO}.
\end{remark}
For system nodes with finite-dimensional input and output spaces however, the characterisation of BIBO stability in terms of the transfer function allows to answer the question in the affirmative.
\begin{proposition}
\label{prop:finiteDimSpacesBIBODuality}
    Let $\Sigma(A, B, C, \mathbf{G})$ be a $L^\infty$-BIBO stable system node with finite-dimensional input and output spaces and assume that $A^*$ generates a $C_0$-semigroup. Then $\systemNodeStD$ is $L^\infty$-BIBO stable.
\end{proposition}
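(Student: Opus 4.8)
The plan is to reduce the statement entirely to the transfer‑function characterisation of BIBO stability in the SISO case, \cite[Thm.~3.4~\&~Thm.~3.5]{a_SchwenningerWierzbaZwart24BIBO}, together with the elementary behaviour of Laplace transforms of matrix‑valued measures under taking adjoints. First I would record the structural facts about the dual node: since $A^*$ generates a $C_0$-semigroup, $\systemNodeStD$ is itself a system node, its input space is $Y^*$ and its output space is $U^*$ — both finite-dimensional because $Y$ and $U$ are — and its transfer function is the map $s \mapsto \mathbf{G}(\overline{s})^*$, defined on a suitable right half-plane (the difference relation \eqref{eq:transferFunctionDifferenceRelation} for the dual node is obtained from that of $\Sigma(A,B,C,\mathbf{G})$ simply by taking Banach-space adjoints and using $\resolvent{s}{A^*} = \bigl(\resolvent{\overline{s}}{A}\bigr)^*$).

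Next, because $\Sigma(A,B,C,\mathbf{G})$ is $L^\infty$-BIBO stable and has finite-dimensional input and output spaces, the cited characterisation yields a measure $h \in \mathcal{M}(\posRealNum, \complNum^{n_Y \times n_U})$ of bounded total variation with $\mathbf{G} = \laplaceTr{h}$, i.e.\ $\mathbf{G}(s) = \int_{[0,\infty)} e^{-st}\intd{h}(t)$ for $\realPart{s}$ sufficiently large. I would then define the matrix-valued measure $h^\ast$ by $h^\ast(E) := h(E)^*$ (entrywise conjugate transpose) for Borel sets $E \subseteq \posRealNum$; since taking the conjugate transpose is an isometry on $\complNum^{n_U \times n_Y}$, we have $h^\ast \in \mathcal{M}(\posRealNum, \complNum^{n_U \times n_Y})$ with $\|h^\ast\|_{\mathcal{M}} = \|h\|_{\mathcal{M}} < \infty$. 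A short computation, moving the adjoint inside the integral (legitimate as all integrals here are ordinary finite-dimensional Lebesgue--Stieltjes integrals) and using $\overline{e^{-\overline{s}t}} = e^{-st}$, gives $\mathbf{G}(\overline{s})^* = \int_{[0,\infty)} e^{-st}\intd{h^\ast}(t) = \laplaceTr{h^\ast}(s)$.

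Finally, the transfer function of $\systemNodeStD$ is thus the Laplace transform of the bounded-total-variation measure $h^\ast$, so applying \cite[Thm.~3.4~\&~Thm.~3.5]{a_SchwenningerWierzbaZwart24BIBO} once more — this time to the system node $\systemNodeStD$, whose input and output spaces $Y^*$ and $U^*$ are again finite-dimensional — shows that $\systemNodeStD$ is $C^\infty$-, $C$- and $L^\infty$-BIBO stable; in particular it is $L^\infty$-BIBO stable, as claimed.

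I do not expect a genuine obstacle here: the argument is essentially bookkeeping. The two points that warrant care are (i) confirming that the adjoint collection really is a system node with transfer function $s \mapsto \mathbf{G}(\overline{s})^*$ on the appropriate half-plane — which is exactly what is encapsulated in the notion of the dual system node once $A^*$ is known to be a generator, and can be cited from the system-node literature (e.g.\ \cite{b_Staffans2005}); and (ii) justifying the interchange of the adjoint with the integral defining $\laplaceTr{h}$, which is immediate precisely because the finite-dimensionality of $U$ and $Y$ turns these into classical vector-/matrix-valued integrals. Everything else is the direct invocation of the SISO characterisation in both directions.
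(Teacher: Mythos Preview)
Your proposal is correct and follows essentially the same route as the paper: invoke the SISO characterisation of $L^\infty$-BIBO stability via a measure $h$ of bounded total variation with $\mathbf{G}=\laplaceTr{h}$, observe that the conjugate-transpose measure has the same total variation and that its Laplace transform is the dual transfer function, and apply the characterisation again. The paper's proof is simply a more compressed version of exactly this argument.
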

\begin{proof}
As $\Sigma(A, B, C, \mathbf{G})$ is $L^\infty$-BIBO stable, by \cite[Thm.~3.4~\&~Thm.~3.5]{a_SchwenningerWierzbaZwart24BIBO} there exists $h \in \mathcal{M}(\posRealNum, \complNum^{d_Y \times d_U})$ such that $\mathbf{G} = \laplaceTr{h}$. But then $\overline{\mathbf{G}}(\overline{\,\cdot\,}) = \laplaceTr{\overline{h}}$ and as $\overline{h} \in \mathcal{M}(\posRealNum, \complNum^{d_U \times d_Y})$ as well, the dual system node is $L^\infty$-BIBO stable too. 
 \end{proof}

\section{LILO stability}
That BIBO stability of a system does in general not imply that the dual system is BIBO stable as well, while not obvious, is also not overly surprising.
After all, duality results in systems theory usually connect \emph{different} but \emph{dual} properties of a system and its dual system.
For example, for $1 \leq p < \infty$, the dual system of any $L^p$-well-posed system --- provided it exists --- is itself $L^{p'}$-well-posed \cite[Thm.~6.2.3]{b_Staffans2005}. And for any operator that is $L^p$-observation-admissible for some $C_0$-semigroup $\mathbb{T}$, the dual operator $C^*$ is $L^{p'}$-control-admissible for the dual semigroup $\mathbb{T}^*$ provided it is strongly continuous as well \cite[Thm.~6.9]{a_WeissObservationOperators1989}. 

Thus arises the question what the corresponding dual notion to BIBO stability is that has to be considered instead. 
A natural first guess --- and as it turns out the correct one --- is the analogously defined $L^1$-to-$L^1$ input-output stability (LILO). For this reason, a characterisation of this property --- similar to the one known for BIBO stability --- would be desirable. 

We begin by defining two notions of LILO stability analogous to those of BIBO stability in Definition \ref{def:BIBOStability}.
\begin{definition}
\label{def:LILOStability}
A system node $\Sigma(A,B,C,\mathbf{G})$ is called $C^ \infty$- ($L^1$-)\emph{LILO stable} if there exists $c > 0$ such that for any classical (generalised distributional) solution $(u,x,y)$ with $u \in C^\infty_c\left( (0,\infty), U \right)$ ($u \in L^1_\loc(\posRealNum, U)$) and $x(0) = 0$ we have that $y \in L^1_\loc(\posRealNum, Y)$ and for all $t > 0$
\begin{equation}
\label{eq:LILOInequality}
    \| y \|_{\lpSpaceDT{1}{[0,t]}{Y}} \leq c \| u \|_{\lpSpaceDT{1}{[0,t]}{U}}.
\end{equation}
\end{definition}

We can then prove completely analogous to the proofs of \cite[Thm.~3.4~\&~Thm.~3.5]{a_SchwenningerWierzbaZwart24BIBO} the following full characterisation of LILO stability in the SISO case.
\begin{theorem}
\label{thm:characterizationLILO}
    Let $\Sigma(A,B,C,\mathbf{G})$ be a system node with finite dimensional input and output spaces. Then the following are equivalent:
    \begin{enumerate}
        \item $\Sigma(A,B,C,\mathbf{G})$ is $C^\infty$-LILO stable.
        \item $\Sigma(A,B,C,\mathbf{G})$ is $L^1$-LILO stable.
        \item There exists a measure of bounded total variation $h \in \mathcal{M}(\posRealNum, \boundedOp{U}{Y})$ such that $\mathbf{G} = \laplaceTr{h}$ on $\complNum_{\omega(\mathbb{T})}$.
    \end{enumerate}
\end{theorem}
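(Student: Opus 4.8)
The plan is to establish the cyclic chain of implications $(3) \Rightarrow (2) \Rightarrow (1) \Rightarrow (3)$, following the structure of the proofs of \cite[Thm.~3.4~\&~Thm.~3.5]{a_SchwenningerWierzbaZwart24BIBO} for BIBO stability, with the roles of the spaces $L^\infty$ and $L^1$ interchanged. The implication $(2) \Rightarrow (1)$ is immediate: every classical solution is a generalised distributional solution, and $C^\infty_c((0,\infty),U) \subset L^1_{\loc}(\posRealNum,U)$, so that the estimate \eqref{eq:LILOInequality} for $L^1$-LILO stability restricts to smooth compactly supported inputs.

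For $(3) \Rightarrow (2)$ the key point is that, for zero initial state, the distributional output is convolution with $h$. For a compactly supported $u \in L^1$ one has $\widehat{y}(s) = \mathbf{G}(s)\widehat{u}(s) = \laplaceTr{h}(s)\,\laplaceTr{u}(s) = \laplaceTr{h \ast u}(s)$ on a right half-plane, so $y = h \ast u$ by injectivity of the Laplace transform; since the distributional input-output map is causal, for arbitrary $u \in L^1_{\loc}(\posRealNum,U)$ and fixed $t>0$ we may replace $u$ by $\mathbbm{1}_{[0,t]}u$ without changing $y|_{[0,t]}$. Young's inequality for the convolution of a bounded measure with an $L^1$-function on the half-line then gives $\| y \|_{\lpSpaceDT{1}{[0,t]}{Y}} \leq \| h \|_{\mathcal{M}} \| u \|_{\lpSpaceDT{1}{[0,t]}{U}}$, i.e.\ $L^1$-LILO stability with constant $\| h \|_{\mathcal{M}}$.

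The core is $(1) \Rightarrow (3)$. Writing $d_U = \dim U$, $d_Y = \dim Y$ and fixing bases, a matrix-valued function is the Laplace transform of a measure in $\mathcal{M}(\posRealNum,\complNum^{d_Y \times d_U})$ if and only if each of its entries is the Laplace transform of a measure in $\mathcal{M}(\posRealNum,\complNum)$, and testing with inputs supported in a single input coordinate while reading off a single output coordinate shows that each scalar subsystem with transfer function $\mathbf{G}_{ij}$ is again $C^\infty$-LILO stable; hence it suffices to treat the scalar case $d_U = d_Y = 1$. Let $T\colon u \mapsto y$ denote the input-output map on $C^\infty_c((0,\infty))$. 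Letting $t \to \infty$ in \eqref{eq:LILOInequality} — legitimate because the constant $c$ there is independent of $t$ — and using monotone convergence yields $\| Tu \|_{\lpSpaceDT{1}{\posRealNum}{\complNum}} \leq c \| u \|_{\lpSpaceDT{1}{\posRealNum}{\complNum}}$, so $T$ extends to a bounded operator on $L^1(\posRealNum)$. This operator is causal and commutes with the right translations $\tau_a$, $a \geq 0$ (time-invariance of the system node), hence it commutes with convolution by any $\phi \in L^1(\posRealNum)$. Applying $T$ to an approximate identity $(\phi_\varepsilon)$ of mollifiers supported in $(0,\varepsilon)$ produces a net $(T\phi_\varepsilon)$ bounded in $L^1(\posRealNum) \subset \mathcal{M}(\posRealNum,\complNum)$; by Banach--Alaoglu it has a weak-$*$ cluster point $h \in \mathcal{M}(\posRealNum,\complNum)$ with $\| h \|_{\mathcal{M}} \leq c$. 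Passing to the limit in $T(\phi_\varepsilon \ast u) = (T\phi_\varepsilon) \ast u$ gives $Tu = h \ast u$ for all $u \in C^\infty_c((0,\infty))$, whence $\laplaceTr{h}(s)\,\widehat{u}(s) = \widehat{Tu}(s) = \mathbf{G}(s)\widehat{u}(s)$ on a right half-plane; choosing $u$ with $\widehat{u} \not\equiv 0$ and using analyticity of both sides yields $\mathbf{G} = \laplaceTr{h}$ on $\complNum_{\omega(\mathbb{T})}$ by analytic continuation.

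I expect $(1) \Rightarrow (3)$ to be the main obstacle, and within it two technical points deserve care: first, making precise that the distributional input-output map is causal and genuinely shift-invariant, so that the convolution representation applies; and second, the identification of $\laplaceTr{h}$ with $\mathbf{G}$, which rests on the Laplace-domain identity $\widehat{y} = \mathbf{G}\widehat{u}$ for smooth compactly supported inputs together with uniqueness of Laplace transforms and analytic continuation. Both facts are standard for system nodes in the sense of Staffans; the remaining ingredients constitute exactly the classical $L^1$-multiplier argument mirroring the $L^\infty$-case that underlies the BIBO characterisation.
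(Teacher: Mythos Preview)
Your proposal is correct and follows essentially the same route as the paper: both adapt the BIBO argument of \cite[Props.~3.6,~3.8,~3.9]{a_SchwenningerWierzbaZwart24BIBO} to the $L^1$ setting, with Young's inequality for $(3)\Rightarrow(2)$ and the characterisation of bounded translation-invariant operators on $L^1$ as convolution with finite measures for $(1)\Rightarrow(3)$. The only difference is presentational: where the paper simply cites \cite[Thm.~I.3.16~\&~Thm.~I.3.19]{b_SteinWeiss2016} for the latter step, you sketch its proof via an approximate identity and Banach--Alaoglu, which is exactly the standard argument underlying those theorems.
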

\begin{proof}
    The proof is completely analogous to the proofs of Propositions 3.6, 3.8 and 3.9 in \cite{a_SchwenningerWierzbaZwart24BIBO}, noting that the cited results \cite[Thm.~3.6.1(i)]{b_GripenbergLondenStaffans1990} and \cite[Thm.~I.3.16 \& Thm.~I.3.19]{b_SteinWeiss2016} can likewise be applied to the $L^1$-case and all necessary norm bounds can be found for the respective $L^1$-norms as well. 
 \end{proof}

\begin{remark}
\label{rem:BIBOLILOimplypIpO}
    The arguments from the proof above (respecitvely from \cite{a_SchwenningerWierzbaZwart24BIBO}) can in fact be extended to show that existence of a measure of bounded total variation $h \in \mathcal{M}(\posRealNum, \complNum^{d_Y \times d_U})$ such that $\laplaceTr{h} = \mathbf{G}$ on the half-plane $\complNum_{\omega(\mathbb{T})}$, implies that for any $u \in \lpSpaceDT{p}{\posRealNum}{U}$ with $p \in [1,\infty]$ we have that $y \in  \lpSpaceDT{p}{\posRealNum}{Y}$ and
    \begin{equation}
        \|y\|_{\lpSpaceDT{p}{\posRealNum}{Y}} \leq \| h \|_{\mathcal{M}} \|u\|_{\lpSpaceDT{p}{\posRealNum}{U}},
    \end{equation}
    i.e.\ $C^\infty$-LILO and $C^\infty$-BIBO stability both imply any $L^p$-to-$L^p$ stability defined analogously in the sense of Definitions \ref{def:BIBOStability} and \ref{def:LILOStability}.
\end{remark}

Together with the characterisation of $L^\infty$-BIBO stability from \cite{a_SchwenningerWierzbaZwart24BIBO}, Theorem~\ref{thm:characterizationLILO} immediately implies the equivalence of BIBO and LILO stability in the SISO case.
\begin{corollary}
\label{cor:BIBOLILOequivalence}
    Let $\Sigma(A,B,C,\mathbf{G})$ be a system node with finite dimensional input and output spaces. Then the following are equivalent:
    \begin{enumerate}
        \item $\Sigma(A,B,C,\mathbf{G})$ is $L^\infty$-BIBO stable.
        \item $\Sigma(A,B,C,\mathbf{G})$ is $L^1$-LILO stable.
    \end{enumerate}
    If furthermore  $A^*$ generates a $C_0$-semigroup the above is also equivalent to
    \begin{enumerate}
    \setcounter{enumi}{2}
        \item $\systemNodeStD$ is $L^\infty$-BIBO stable.
        \item $\systemNodeStD$ is $L^1$-LILO stable.
    \end{enumerate}
\end{corollary}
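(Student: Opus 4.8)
The plan is to funnel all four assertions through the single analytic criterion that $\mathbf{G}$ (respectively the transfer function of the dual) is the Laplace transform of a measure of bounded total variation, which is exactly the object characterising $L^\infty$-BIBO stability in \cite[Thm.~3.4~\&~Thm.~3.5]{a_SchwenningerWierzbaZwart24BIBO} and, by Theorem~\ref{thm:characterizationLILO}, also $L^1$-LILO stability.

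First I would record the equivalence of (1) and (2) with no extra work: by the cited result $\Sigma(A,B,C,\mathbf{G})$ is $L^\infty$-BIBO stable if and only if there is $h \in \mathcal{M}(\posRealNum, \complNum^{d_Y \times d_U})$ with $\mathbf{G} = \laplaceTr{h}$ on $\complNum_{\omega(\mathbb{T})}$, and by item~(3) of Theorem~\ref{thm:characterizationLILO} this same condition is equivalent to $L^1$-LILO stability; hence (1) $\Leftrightarrow$ (2).

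Next, assume in addition that $A^*$ generates a $C_0$-semigroup $\mathbb{T}^*$. I would first carry out the (essentially bookkeeping) verification that $\systemNodeStD$ is again a system node, on the triple $(Y^*, X^*, U^*)$ of spaces which are still finite-dimensional, with transfer function $\overline{\mathbf{G}}(\overline{\,\cdot\,})$ and with $\omega(\mathbb{T}^*) = \omega(\mathbb{T})$, so that the relevant right half-plane is unchanged; the difference relation \eqref{eq:transferFunctionDifferenceRelation} for the dual node follows by taking Banach-space adjoints in \eqref{eq:transferFunctionDifferenceRelation} for $\Sigma(A,B,C,\mathbf{G})$, just as implicitly used in the proof of Proposition~\ref{prop:finiteDimSpacesBIBODuality}. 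Applying the $L^\infty$-BIBO characterisation from \cite{a_SchwenningerWierzbaZwart24BIBO} and Theorem~\ref{thm:characterizationLILO} to this dual system node, statements (3) and (4) are each equivalent to the existence of $g \in \mathcal{M}(\posRealNum, \complNum^{d_U \times d_Y})$ with $\overline{\mathbf{G}}(\overline{\,\cdot\,}) = \laplaceTr{g}$.

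Finally I would close the loop exactly as in the proof of Proposition~\ref{prop:finiteDimSpacesBIBODuality}: $\mathbf{G} = \laplaceTr{h}$ for some $h \in \mathcal{M}(\posRealNum, \complNum^{d_Y \times d_U})$ if and only if $\overline{\mathbf{G}}(\overline{\,\cdot\,}) = \laplaceTr{\overline{h}}$ with $\overline{h} \in \mathcal{M}(\posRealNum, \complNum^{d_U \times d_Y})$, since passing to the entrywise-conjugate adjoint measure is an involutive, total-variation-preserving bijection between these two measure spaces that intertwines with the Laplace transform. Chaining the three steps yields the equivalence of (1)--(4). I do not expect a genuine obstacle: everything reduces to the two already-established characterisations together with conjugation-invariance of $\mathcal{M}(\posRealNum,\,\cdot\,)$; the only point demanding a little care is confirming that the dual object really is a system node with transfer function $\overline{\mathbf{G}}(\overline{\,\cdot\,})$ on $\complNum_{\omega(\mathbb{T})}$ before Theorem~\ref{thm:characterizationLILO} and the BIBO characterisation may legitimately be invoked for it.
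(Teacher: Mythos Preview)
Your proposal is correct and follows essentially the same route as the paper: the corollary is stated there as an immediate consequence of Theorem~\ref{thm:characterizationLILO} combined with the $L^\infty$-BIBO characterisation from \cite{a_SchwenningerWierzbaZwart24BIBO}, and the dual statements are handled via the conjugate-measure argument already used in Proposition~\ref{prop:finiteDimSpacesBIBODuality}. Your write-up is in fact more explicit than the paper's, which gives no separate proof and treats the result as self-evident once the two characterisations are in place.
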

\begin{remark}
    The statement of the previous corollary is not  unexpected in light of results on bounded extendability of convolution operators to different function spaces, see for example \cite{a_UnserNoteOnBiboStability} for a treatment of such questions aimed at BIBO stability in particular. However, just as was argued in \cite[Sec.~3.3.]{a_SchwenningerWierzbaZwart24BIBO}, the main point of the proof leading up to Corollary \ref{cor:BIBOLILOequivalence} is to show that these bounded extensions coincide with the actual input-output mapping resulting from the generalised solution concept.
\end{remark}

\section{A sufficient criterion for LILO stability}
Theorem \ref{thm:characterizationLILO} provides a characterisation of both $C^\infty$- as well as $L^1$-LILO stability, however it is restricted to the SISO case. While so far we cannot give a similar characterisation for the general case allowing for infinite-dimensional input and output spaces, one can --- in the same spirit as it was done in \cite{a_SchwenningerWierzbaZwart24BIBO} for BIBO stability --- derive sufficient conditions for LILO stability in this less restrictive setting.

For this, note first that the equivalence of $C^\infty$- and $L^1$-LILO stability can be extended beyond the SISO case.
\begin{proposition}
\label{prop:CLILOLLILOequiv}
    $\systemNode{A}{B}{C}{\mathbf{G}}$ is $C^\infty$-LILO stable if and only it is $L^1$-LILO stable.  
\end{proposition}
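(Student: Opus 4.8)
The plan is to reduce the statement to a density and closure argument, exploiting that $C^\infty_c((0,\infty),U)$ is dense in $L^1_{\loc}(\posRealNum,U)$ in the appropriate local sense and that the input-output map arising from the generalised solution concept is continuous. The nontrivial direction is: assume $\systemNode{A}{B}{C}{\mathbf{G}}$ is $C^\infty$-LILO stable; show it is $L^1$-LILO stable. (The converse is immediate, since every $u \in C^\infty_c((0,\infty),U)$ lies in $L^1_{\loc}(\posRealNum,U)$, and for such smooth compactly supported inputs the classical solution coincides with the generalised distributional one, so the $L^1$-LILO estimate \eqref{eq:LILOInequality} specialises directly to the $C^\infty$-LILO estimate with the same constant.)

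First I would fix $t > 0$ and consider the map $u \mapsto y$ sending an input to the output component of the generalised distributional solution with $x(0) = 0$. For $u \in C^\infty_c((0,\infty),U)$ the $C^\infty$-LILO hypothesis gives $\| y \|_{\lpSpaceDT{1}{[0,t]}{Y}} \le c \| u \|_{\lpSpaceDT{1}{[0,t]}{U}}$ with $c$ independent of $t$. Next I would take an arbitrary $u \in L^1_{\loc}(\posRealNum,U)$, fix $T > t$, and approximate $\mathbbm{1}_{[0,T]} u$ in $\lpSpaceDT{1}{[0,T]}{U}$ by a sequence $(u_n) \subset C^\infty_c((0,\infty),U)$ (standard mollification plus cutoff, with supports staying in a fixed compact subset of $(0,\infty)$ — one can translate slightly off $0$ since changing $u$ on a null set near the endpoints does not affect the $L^1$ estimate we are after). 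The key structural input is that the output depends on $u$ in a way that is continuous for the relevant topologies: the state is $x(t) = \int_0^t \mathbb{T}_{-1}(t-s) B u(s)\intd s$ (as $x(0)=0$), which depends linearly and boundedly on $u \in \lpSpaceDT{1}{[0,t]}{U}$ into $C([0,t],X_{-1})$, and the distributional formula \eqref{eq:distributionalOutputDef} for $y$ is a continuous (second-distributional-derivative) operation; hence $u_n \to u$ in $\lpSpaceDT{1}{[0,T]}{U}$ forces $y_n \to y$ in the space of $Y$-valued distributions on $(0,T)$. Combining this with the uniform bound $\| y_n \|_{\lpSpaceDT{1}{[0,t]}{Y}} \le c \| u_n \|_{\lpSpaceDT{1}{[0,t]}{U}} \to c \| u \|_{\lpSpaceDT{1}{[0,t]}{U}}$, weak-$*$ lower semicontinuity of the $L^1$-norm (viewing $L^1([0,t],Y)$ inside the bidual, or testing against $C_c((0,t))$-functions) yields $y \in \lpSpaceDT{1}{[0,t]}{Y}$ with $\| y \|_{\lpSpaceDT{1}{[0,t]}{Y}} \le c \| u \|_{\lpSpaceDT{1}{[0,t]}{U}}$. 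Letting $T \downarrow t$ is harmless, and since $t$ was arbitrary this is exactly $L^1$-LILO stability.

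The main obstacle I anticipate is making the passage from distributional convergence of $(y_n)$ together with the uniform $L^1$-bound to the conclusion that the limit distribution $y$ is genuinely an $L^1$-function satisfying the bound — i.e.\ the lower-semicontinuity step — rigorously and in the Banach-space-valued setting where $Y$ need not be reflexive and $L^1([0,t],Y)$ has no nice weak-$*$ structure. I would handle this by testing: for every $\varphi \in C_c((0,t),Y')$ (or a norming subset thereof), $\langle y_n, \varphi\rangle \to \langle y, \varphi\rangle$ while $|\langle y_n,\varphi\rangle| \le \| y_n\|_{\lpSpaceDT{1}{[0,t]}{Y}} \|\varphi\|_\infty \le c\|u\|_{\lpSpaceDT{1}{[0,t]}{U}}\|\varphi\|_\infty + o(1)$, so $y$ extends to a bounded functional on $C_0((0,t),Y')$, whence (by the representation of such functionals, or by a direct argument using that the $y_n$ are uniformly $L^1$ and converge distributionally) $y$ is represented by an $L^1$-function with the stated norm bound. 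A secondary technical point is ensuring the mollified inputs $u_n$ can be taken in $C^\infty_c((0,\infty),U)$ rather than merely $C^\infty_c(\realNum,U)$ — this is arranged by first multiplying by a smooth cutoff supported in $(\tfrac1n, T)$ and observing that the resulting $L^1$-error over $[0,T]$ still tends to $0$, which costs nothing in the final estimate. Everything else is routine and parallels the corresponding arguments in \cite{a_SchwenningerWierzbaZwart24BIBO}.
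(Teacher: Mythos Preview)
Your overall strategy --- approximate $u \in L^1_{\loc}$ by $u_n \in C^\infty_c$, use the $C^\infty$-LILO bound on the $y_n$, and pass to the limit --- is the same as the paper's. However, two of your steps have genuine gaps.

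First, the lower-semicontinuity argument you propose for concluding $y \in L^1$ does not work. Even for $Y = \complNum$, a sequence $(y_n)$ bounded in $L^1$ and convergent in distributions need not have an $L^1$ limit: take $y_n = n\,\mathbbm{1}_{[0,1/n]}$, which converges distributionally to $\delta_0$. Your claim that a bounded functional on $C_0((0,t),Y')$ is represented by an $L^1([0,t],Y)$-function is therefore false already in the scalar case (Riesz representation gives measures, not $L^1$-functions), and matters only get worse for general~$Y$. The paper sidesteps this entirely by the simple observation that linearity of the input-output map together with the $C^\infty$-LILO bound applied to $u_m - u_n \in C^\infty_c$ gives $\|y_m - y_n\|_{L^1} \le c\,\|u_m - u_n\|_{L^1}$, so $(y_n)$ is \emph{Cauchy} in $L^1$ and converges there to some $\widetilde{y}$ with $\|\widetilde{y}\|_{L^1} \le c\,\|u\|_{L^1}$ --- no semicontinuity needed.

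Second, your justification that $y_n \to y$ distributionally is too quick. You invoke continuity of $u \mapsto x$ into $C([0,t],X_{-1})$ and assert that the distributional output formula is ``continuous'', but \eqref{eq:distributionalOutputDef} involves applying $C\&D$ to $\left[\begin{smallmatrix}K\\L\end{smallmatrix}\right] = \int_0^\cdot(\cdot-s)\left[\begin{smallmatrix}x(s)\\u(s)\end{smallmatrix}\right]\intd s$, and $C\&D$ is only closed on its domain, not bounded from $X \times U$. The paper handles this by showing that $\left[\begin{smallmatrix}K_n\\L_n\end{smallmatrix}\right] \to \left[\begin{smallmatrix}K\\L\end{smallmatrix}\right]$ locally uniformly in $X \times U$ (using a resolvent identity to upgrade the a~priori $X_{-1}$-convergence of $K_n$ to $X$-convergence) and, separately, that $C\&D\left[\begin{smallmatrix}K_n\\L_n\end{smallmatrix}\right]$ is locally uniformly Cauchy (via the $C^\infty$-LILO estimate once more); closedness of $C\&D$ then identifies the limit and yields $\widetilde{y} = y$.
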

\begin{proof}
    The direction from $L^1$-LILO stability to $C^\infty$-LILO stability is clear.

    For the other direction, let $u \in \lpSpaceDT{1}{\posRealNum}{U}$ and $(u_k)_{k\geq 0} \subset C^\infty_c(\posRealNum, U)$ a sequence such that $u_k \rightarrow u$ in $\lpSpaceDT{1}{\posRealNum}{U}$ as $k \rightarrow \infty$. Then consider the classical solutions $(u_k, x_k, y_k)$ and the generalised solution $(u,x,y)$.

    First we see that $y_k \rightarrow \widetilde{y}$ for some $\widetilde{y} \in \lpSpaceDT{1}{\posRealNum}{Y}$, as $u_m - u_n \in C^\infty_c(\posRealNum, U)$ for any $m,n > 0$ and thus by  the assumed $C^\infty$-LILO stability $\|y_m - y_n\|_{\lpSpaceDT{1}{\posRealNum}{Y}} \leq c \|u_m - u_n\|_{\lpSpaceDT{1}{\posRealNum}{U}}$ for some uniform constant $c > 0$ making $(y_k)$ a Cauchy sequence. Furthermore, by continuity we have that $\|\widetilde{y}\|_{\lpSpaceDT{1}{\posRealNum}{Y}} \leq c \|u\|_{\lpSpaceDT{1}{\posRealNum}{U}}$.

    It remains to show that $\widetilde{y} = y$, that is that the limit of the $y_k$ coincides with the distributionally defined output $y$ of the generalised solution. For this, define first $\left[ \begin{smallmatrix}
        K_k \\ L_k
    \end{smallmatrix} \right] := \int_0^\cdot (\cdot - s) \left[ \begin{smallmatrix}
            x_k(s) \\ u_k(s)
        \end{smallmatrix} \right] \intd s$ and $\left[ \begin{smallmatrix}
        K \\ L
    \end{smallmatrix} \right] := \int_0^\cdot (\cdot - s) \left[ \begin{smallmatrix}
            x(s) \\ u(s)
        \end{smallmatrix} \right] \intd s$. Then observe that we have
        \begin{equation}
        \label{eq:localUnifLkConv}
        \begin{split}
            \left\| L - L_k \right\|_{\lpSpaceDT{\infty}{[0,T]}{U}} &= \left\| \int_0^\cdot (\cdot - s)
            (u(s) - u_k(s) ) \intd s  \right\|_{\lpSpaceDT{\infty}{[0,T]}{U}} \\ &\leq T \left\| u - u_k  \right\|_{\lpSpaceDT{1}{[0,T]}{U}}.
            \end{split}
        \end{equation}
        Furthermore $K_k, K \in W^{2,1}\left( \posRealNum, U \right)$ and thus by \cite[Lem.~4.7.8]{b_Staffans2005} we have
        \begin{equation*}
            \dot{K}(t) - \dot{K}_k(t) = A_{-1} \left( K(t) - K_k(t) \right) + B \left( L(t) - L_k(t) \right)
        \end{equation*}
        and for $s \in \rho(A)$ we can find
    \begin{align*}
            K(t) - K_k(t) = &- \resolvent{s}{A_{-1}} \int_0^t x(s) - x_k(s) \intd s  + s \resolvent{s}{A_{-1}} \left( K(t) - K_k(t) \right) \\ &+ \resolvent{s}{A_{-1}} B \left( L(t) - L_k(t) \right).
        \end{align*}
    Thus we have the estimate
    \begin{equation}
    \label{eq:KkIntermEstimate}
    \begin{split}
            \left\| K(t) - K_k(t) \right\|_X \leq &- \left\| \resolvent{s}{A_{-1}} \right\|_{\boundedOp{X_{-1}}{X}} \left\| \int_0^t x(s) - x_k(s) \intd s \right\|_{X_{-1}} \\ &+ |s| \left\| \resolvent{s}{A_{-1}} \right\|_{\boundedOp{X_{-1}}{X}} \left\| K(t) - K_k(t) \right\|_{X_{-1}} \\ &+ \left\| \resolvent{s}{A_{-1}} \right\|_{\boundedOp{X_{-1}}{X}} \| B \|_{\boundedOp{U}{X_{-1}}} \left\| L(t) - L_k(t) \right\|_U.
    \end{split}
    \end{equation}
    Now, as $B \in \boundedOp{U}{X_{-1}}$ it is in particular $L^\infty$-control-admissible for the state space $X_{-1}$ and thus from $K(t) - K_k(t) = \int_0^t \semiGroupEx{t-s} B (L(s) - L_k(s) ) \intd s$ we can conclude for any $T > 0$ and $t \in [0,T]$ that 
    \begin{equation}
        \| K(t) - K_k(t) \|_{X_{-1}} \leq c_{T} \| L -\ L_k \|_{\lpSpaceDT{\infty}{[0,T]}{U}} \leq c_T T \| u - u_k \|_{\lpSpaceDT{1}{[0,T]}{U}}
    \end{equation}
    for some constant $c_T > 0$. Similarly, as $B$ is also $L^1$-control-admissible for the state space $X_{-1}$ as a bounded operator, we find for some constant $\widetilde{c}_T > 0$ that
    \begin{equation}
        \left\| \int_0^t x(s) - x_k(s) \intd s \right\|_{X_{-1}} \leq \widetilde{c}_{T} T \| u - u_k \|_{\lpSpaceDT{1}{[0,T]}{U}}.
    \end{equation}
    Then with these observations Equation~\eqref{eq:KkIntermEstimate} implies for any $T > 0$ that $\left\| K(t) - K_k(t) \right\|_X \\ \leq \kappa_T \| u - u_k \|_{\lpSpaceDT{1}{[0,T]}{U}}$ for some constant $\kappa_T > 0$ and hence from this and Equation~\eqref{eq:localUnifLkConv} we find that $\left[ \begin{smallmatrix}
        K_k \\ L_k
    \end{smallmatrix} \right] \rightarrow \left[ \begin{smallmatrix}
        K \\ L
    \end{smallmatrix} \right]$ locally uniformly in $X \times U$.

    Further, considering $(u_k, x_k, y_k)$ as a generalised distributional solutions, we find that by the $C^\infty$-LILO stability we have
    \begin{align*}
    &\left\| C\& D \begin{bmatrix}
        K_m \\ L_m
    \end{bmatrix} - C\& D \begin{bmatrix}
        K_n \\ L_n
    \end{bmatrix}  \right\|_{\lpSpaceDT{\infty}{[0,T]}{Y}} = \left\| \int_0^\cdot (\cdot - s)
            (y_m(s) - y_n(s) ) \intd s  \right\|_{\lpSpaceDT{\infty}{[0,T]}{Y}}\\ &\qquad\leq T \left\| y_m - y_n  \right\|_{\lpSpaceDT{1}{[0,T]}{Y}} \leq c T \left\| u_m - u_n  \right\|_{\lpSpaceDT{1}{[0,T]}{U}}.
    \end{align*}
    This renders $\CandD \left[ \begin{smallmatrix}
        K_k \\ L_k
    \end{smallmatrix} \right]$ a locally uniform Cauchy sequence and therefore also locally uniformly convergent. Hence $\left[ \begin{smallmatrix}
        K_k \\ L_k
    \end{smallmatrix} \right]$ converges locally uniformly in the $C \& D$ graph norm

    Considering now the action of $\widetilde{y}$ on a test function $\varphi \in C^\infty_c(\posRealNum, Y')$ and using the locally uniform convergence and the continuity of $\CandD$ in its graph norm we find that 
    \begin{align*}
        \innerProd{\varphi}{\widetilde{y}} = \lim_{k\rightarrow\infty} \innerProd{\varphi}{y_k} 
        &= \lim_{k\rightarrow\infty} \innerProd{\varphi''}{\int_0^\cdot (\cdot - s) C\& D \begin{bmatrix}
            x_k(s) \\ u_k(s)
        \end{bmatrix} \intd s}
        \\
        &= \lim_{k\rightarrow\infty} \innerProd{\varphi''}{C\& D \begin{bmatrix}
            K_k(s) \\ L_k(s)
        \end{bmatrix}} = \innerProd{\varphi''}{\lim_{k\rightarrow\infty} C\& D \begin{bmatrix}
            K_k(s) \\ L_k(s)
        \end{bmatrix}} 
        \\
        &= \innerProd{\varphi''}{C\& D \begin{bmatrix}
            K(s) \\ L(s)
        \end{bmatrix}} 
        = \innerProd{\varphi}{y}.
    \end{align*}   
 \end{proof}

To derive a sufficient condition for LILO stability we will make use of the following characterisation of $L^1$-observation-admissible operators.
This result, in the special case that $C = A$, is closely related to the idea of $L^1$-maximal regularity \cite{a_JacobSchwenningerWintermayr2022} and the following proof is based upon the approach used in \cite{a_KaltonPortal08MaximalRegularity}, where, however, the notion of admissible operators was not used explicitly. 
\begin{proposition}
    \label{prop:admissibilityMaxReg}
    Let $0 \in \rho(A)$. Then an operator $C \in \boundedOp{X_1}{Y}$ is infinite-time $L^1$-observation-admissible if and only if there exists a $c>0$ such that for all $u \in \lpSpaceDT{1}{\posRealNum}{X}$ one has that
    \begin{equation}
    \label{eq:maxRegProperty}
        \left\| C \int_0^\cdot \semiGroup{\cdot - s} u(s) \intd s \right\|_{\lpSpaceDT{1}{\posRealNum}{Y}} \leq c \left\| u \right\|_{\lpSpaceDT{1}{\posRealNum}{X}}.
    \end{equation}
\end{proposition}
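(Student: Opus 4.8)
The plan is to establish the two implications separately. The direction from admissibility to \eqref{eq:maxRegProperty} is, at heart, an application of Tonelli's theorem to the convolution, while the converse is a Lebesgue-differentiation argument using an input that concentrates at the origin.

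\emph{Admissibility $\Rightarrow$ \eqref{eq:maxRegProperty}.} Suppose $C$ is infinite-time $L^1$-observation-admissible. Letting $t \to \infty$ in the defining inequality (monotone convergence) gives $\|C\semiGroup{\cdot}x\|_{\lpSpaceDT{1}{\posRealNum}{Y}} \le c\|x\|_X$ for all $x \in X_1$, so $x \mapsto C\semiGroup{\cdot}x$ extends to a bounded operator $\Psi \colon X \to \lpSpaceDT{1}{\posRealNum}{Y}$. I would first work with $u$ in the dense subspace of $\lpSpaceDT{1}{\posRealNum}{X}$ consisting of $X_1$-valued functions for which the state $t \mapsto \int_0^t \semiGroup{t-s} u(s)\intd s$ is again $X_1$-valued (e.g.\ step functions valued in $X_1$); for such $u$ the operator $C$ may be pulled inside the integral, and then, the integrand being nonnegative,
\[
\left\| C\int_0^\cdot \semiGroup{\cdot-s}u(s)\intd s\right\|_{\lpSpaceDT{1}{\posRealNum}{Y}} \le \int_0^\infty\!\int_0^t \|C\semiGroup{t-s}u(s)\|_Y\intd s\intd t = \int_0^\infty \|C\semiGroup{\cdot}u(s)\|_{\lpSpaceDT{1}{\posRealNum}{Y}}\intd s \le c\|u\|_{\lpSpaceDT{1}{\posRealNum}{X}},
\]
where the middle equality is Tonelli together with the substitution $t \mapsto t-s$, and the last step is admissibility applied for each fixed $s$. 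The same computation shows that $u \mapsto \int_0^\cdot (\Psi u(s))(\cdot - s)\intd s$ is a bounded operator on all of $\lpSpaceDT{1}{\posRealNum}{X}$; this is the meaning of the left-hand side of \eqref{eq:maxRegProperty} for general $u$, and it coincides with the literal expression on the dense subspace, so \eqref{eq:maxRegProperty} holds with the same constant $c$.

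\emph{\eqref{eq:maxRegProperty} $\Rightarrow$ admissibility.} Fix $x \in X_1$ and test \eqref{eq:maxRegProperty} with $u_\epsilon := \tfrac1\epsilon \mathbbm{1}_{[0,\epsilon]} \otimes x \in \lpSpaceDT{1}{\posRealNum}{X}$, noting that $\|u_\epsilon\|_{\lpSpaceDT{1}{\posRealNum}{X}} = \|x\|_X$. Since $x \in \dom(A)$, the state $\int_0^t \semiGroup{t-s}u_\epsilon(s)\intd s = \tfrac1\epsilon\int_{\max\{t-\epsilon,0\}}^{t}\semiGroup{\sigma}x\intd\sigma$ lies in $X_1$ for every $t$, so $C$ commutes with the integral, and for $t \ge \epsilon$ this value equals $\tfrac1\epsilon\int_{t-\epsilon}^{t}C\semiGroup{\sigma}x\intd\sigma$. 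Since $\sigma \mapsto \semiGroup{\sigma}x$ is continuous into $X_1$ for $x \in X_1$, the map $\sigma \mapsto C\semiGroup{\sigma}x$ is continuous on $\posRealNum$, so these averages converge to $C\semiGroup{t}x$ as $\epsilon \downarrow 0$, for every $t > 0$. Fatou's lemma (along any sequence $\epsilon \downarrow 0$) then yields
\[
\int_0^\infty \|C\semiGroup{t}x\|_Y\intd t \le \liminf_{\epsilon \downarrow 0} \int_\epsilon^\infty \left\| \tfrac1\epsilon\int_{t-\epsilon}^{t}C\semiGroup{\sigma}x\intd\sigma \right\|_Y\intd t \le \liminf_{\epsilon \downarrow 0} c\|u_\epsilon\|_{\lpSpaceDT{1}{\posRealNum}{X}} = c\|x\|_X,
\]
which is exactly infinite-time $L^1$-observation-admissibility of $C$.

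The routine points left to check are the density of the chosen $X_1$-valued test functions in $\lpSpaceDT{1}{\posRealNum}{X}$, the fact that $\dom(A)$-valued Bochner integrals stay in $\dom(A)$ with $A$ (hence $C$) passing through, and the $X_1$-continuity of orbits of the semigroup starting in $X_1$. The genuinely delicate issue --- the one I would be most careful about --- is the meaning of the left-hand side of \eqref{eq:maxRegProperty} when $u$ takes values outside $\dom(A)$: as is standard in admissibility theory, this is read via the extended output map $\Psi$ as $\int_0^\cdot (\Psi u(s))(\cdot - s)\intd s$, and the Tonelli estimate above is precisely what makes this reading well posed and bounded. I expect the standing hypothesis $0 \in \rho(A)$ to enter only as a normalisation (fixing $\beta = 0$ in the $X_1$-norm, consistently with the $L^1$-maximal-regularity framework) and no assumption on the growth bound of $\mathbb{T}$ to be needed.
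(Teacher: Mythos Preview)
Your proof is correct, and the forward direction is a genuinely different and more elementary route than the paper's. The paper adapts the Kalton--Portal discrete-time technique: it works with step functions $f = \sum_k x_k \mathbb{1}_{((k-1)h,kh)}$ with $x_k \in \dom(A)$, splits the convolution at the mesh points, and rewrites each piece via the identity $\semiGroup{h} - I = A\int_0^h \semiGroup{\tau}\intd\tau$, which introduces the operator $CA^{-1}$ and leads to the constant $\kappa M + \|CA^{-1}\|(M+1)$. That appearance of $A^{-1}$ is precisely where the hypothesis $0 \in \rho(A)$ enters, and the paper's remark immediately following the proposition explicitly flags removing this assumption as open. Your Tonelli argument bypasses the discretisation entirely, produces the sharper constant $c$ (the admissibility constant itself), and does not use $0 \in \rho(A)$ at all --- so it in fact resolves the question raised in that remark. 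What the paper's approach buys is a direct link to the maximal-regularity literature it cites; what yours buys is a shorter proof under a weaker hypothesis.

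For the converse the two proofs are close in spirit. The paper's test function $u_n = n\,\mathbb{1}_{[0,1/n]}(\cdot)\,\semiGroup{\cdot}x$ is slightly slicker than your constant $u_\epsilon = \epsilon^{-1}\mathbb{1}_{[0,\epsilon]}\otimes x$: because of the extra factor $\semiGroup{\cdot}$, one gets $\int_0^t \semiGroup{t-s}u_n(s)\intd s = \semiGroup{t}x$ exactly for $t \geq 1/n$, so no averaging occurs and no Fatou step is needed. Your argument via Fatou is of course equally valid.
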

\begin{proof}
        Let $C \in \boundedOp{X_1}{Y}$ be infinite-time $L^1$-observation-admissible, i.e.\ there exists $\kappa > 0$ such that for any $x \in \dom(A)$
    \begin{align*}
        \| C \semiGroup{\cdot} x \|_{\lpSpaceDT{1}{\posRealNum}{Y}} \leq \kappa \| x \|_X.
    \end{align*}
    Then for any $0 < h < \infty$ and any sequence $(x_k)_{k>0}$ in $D(A)$ with $\sum_{k=1}^\infty \|x_k \|_X < \infty$ define for $n > 0$
    \begin{align*}
        v_n[x] = \sum_{k=1}^{n}  C \inv{A} \semiGroup{(n-k)h} \left( \semiGroup{h} - I \right) x_k.
    \end{align*}
    For this sequence we then find that
    \begin{align*}
        &\sum_{n=1}^{\infty} \left\| v_n[x] \right\|_Y =
        \sum_{n=1}^{\infty} \left\| \sum_{k=1}^{n}  C \inv{A} \semiGroup{(n-k)h} \left( \semiGroup{h} - I \right) x_k \right\|_Y \\
        &\quad \leq
        \sum_{n=1}^{\infty}  \sum_{k=1}^{n} \left\|  C \inv{A} \semiGroup{(n-k)h} \left( \semiGroup{h} - I \right) x_k \right\|_Y \\
        &\quad =
        \sum_{k=1}^{\infty}  \sum_{j=1}^{\infty} \left\|  C \inv{A} \semiGroup{(j-1)h} \int_0^h A \semiGroup{\tau} x_k \intd \tau \right\|_Y \\
        &\quad \leq
        \sum_{k=1}^{\infty}  \int_0^\infty \left\|  C \semiGroup{t} x_k \right\|_Y \intd t  \leq \kappa \sum_{j=1}^\infty \| x_k \|_X.
    \end{align*}
        Consider then for any such sequence $(x_k)_{k>0}$ as above the step function $f = \sum_{k=1}^\infty x_k \idMatrix{((k-1)h, kh)}$ and define for $t\geq0$
    \begin{align*}
        F(t) = \int_0^t C \semiGroup{t-r} f(r) \intd r.
    \end{align*}
    Then for any $0 \leq \tau < h$ and $n \geq 1$ we find that
    \begin{align*}
        F((n-1)h + \tau) &=  
         \sum_{k=1}^{n-1} \int_0^h C \semiGroup{(n-k)h + \tau - r} x_k \intd r + \int_0^\tau C \semiGroup{\tau - r} x_n \intd r.
    \end{align*}
    For the first term we find that
    \begin{align*}
        &\sum_{k=1}^{n-1} \int_0^h C \semiGroup{(n-k)h + \tau - r} x_k \intd r  \\
        &= \sum_{k=1}^{n-1} C \inv{A} \semiGroup{(n - 1 -k)h} \int_0^h A \semiGroup{h - r} \semiGroup{\tau} x_k \intd r \\
        &= \sum_{k=1}^{n-1} C \inv{A} \semiGroup{(n - 1 -k)h} \left( \semiGroup{h} - I \right) \semiGroup{\tau} x_k \intd r = v_{n-1}[\semiGroup{\tau} x]
    \end{align*}
    and for the second one that
    \begin{align*}
        \int_0^\tau C \semiGroup{\tau - r} x_n &= C \inv{A} \int_0^\tau A \semiGroup{\tau - r} x_n \intd r = C \inv{A} \left( \semiGroup{\tau} - I \right) x_n.
    \end{align*}
    Then for $h < h_0$ sufficiently small we have $\| \semiGroup{\tau} \| \leq \sup_{t \in [0,h_0]} \| \semiGroup{t} \| =: M < \infty$ independent of $h$.
    We then find that
    \begin{align*}
        &\int_0^\infty \| F(t) \| \intd t = \sum_{n=1}^\infty \int_0^h \| F((n-1)h + \tau) \| \intd \tau \\
        &\quad\leq \int_0^h \sum_{n=1}^\infty \| v_{n-1}[\semiGroup{\tau} x] \|_Y \intd \tau + h \| C \inv{A} \| \left( M + 1 \right) \sum_{n=1}^\infty \| x_k \|_X \\
        &\quad\leq \int_0^h \kappa \sum_{n=1}^\infty \| \semiGroup{\tau} x_n \|_X + \| C \inv{A} \| \left( M + 1 \right) \| f \|_{\lpSpaceDT{1}{\posRealNum}{X}} \\
        &\quad\leq \left( \kappa M + \| C \inv{A} \| \left( M + 1 \right) \right) \| f \|_{\lpSpaceDT{1}{\posRealNum}{X}}.
    \end{align*}
    The density of these step functions in $\lpSpaceDT{1}{\posRealNum}{X}$ then implies Equation \eqref{eq:maxRegProperty}. 

    For the converse, assume that Equation \eqref{eq:maxRegProperty} holds. Then consider for $n > 0$ and $x \in \dom(A)$ the functions $u_n := n \idMatrix{[0,\frac{1}{n}]}(\cdot) \semiGroup{\cdot} x \in \lpSpaceDT{1}{\posRealNum}{X}$. Then 
    \begin{align*}
        \left\| u_n \right\|_{\lpSpaceDT{1}{\posRealNum}{X}} = \left\| n \idMatrix{[0,\frac{1}{n}]}\semiGroup{\cdot} x \right\|_{\lpSpaceDT{1}{\posRealNum}{X}} &= n \int_0^{\frac{1}{n}} \left\| \semiGroup{s} x \right\|_X \intd s \\ &\leq \sup_{s \in [0,\frac{1}{n}]} \| \semiGroup{s} \| \|x\|_X
    \end{align*}
    while for the left hand side of Equation \eqref{eq:maxRegProperty} we find that
    \begin{align*}
        \left\| C \int_0^\cdot \semiGroup{\cdot - s} u_n(s) \intd s \right\|_{\lpSpaceDT{1}{\posRealNum}{Y}} &= \left\| C \int_0^\cdot \semiGroup{\cdot - s} n \idMatrix{[0,\frac{1}{n}]}(s) \semiGroup{s} x \intd s \right\|_{\lpSpaceDT{1}{\posRealNum}{Y}} \\
        &\geq \left\| C \semiGroup{\cdot} x \right\|_{\lpSpaceDT{1}{[\frac{1}{n}, \infty)}{Y}}.
    \end{align*}
    Now as there is $\rho > 0$ and $\omega > \omega_{\mathbb{T}}$ such that $\| \semiGroup{s} \| \leq \rho e^{\omega s}$ for $s \geq 0$, we thus find that Equation \eqref{eq:maxRegProperty} implies for any $x \in \dom(A)$ and any $n > 0$ that
    \begin{equation}
        \left\| C \semiGroup{\cdot} x \right\|_{\lpSpaceDT{1}{[\frac{1}{n},\infty)}{Y}} \leq c \rho \max\left\lbrace 1, e^{\omega} \right\rbrace \| x \|_X,
    \end{equation}
    and thus in particular also
    \begin{equation}
        \left\| C \semiGroup{\cdot} x \right\|_{\lpSpaceDT{1}{\posRealNum}{Y}} \leq c \rho \max\left\lbrace 1, e^{\omega} \right\rbrace \| x \|_X.
    \end{equation}
    Hence $C$ is infinite-time $L^1$-observation-admissible. 
\end{proof}
\begin{remark}
    Note that the assumption $0 \in \rho(A)$ is not necessary in the case $C = A$ as discussed in \cite{a_KaltonPortal08MaximalRegularity}. We believe that it should be similarly superfluous here, but so far eliminating the assumption from the proof remains unclear.
\end{remark}

Using Proposition~\ref{prop:CLILOLLILOequiv} we find that a corollary of this characterisation is then the following dual analogon to Lemma~6.1 from \cite{a_SchwenningerWierzbaZwart24BIBO}.
\begin{corollary}
\label{cor:CObsAdmLILO}
    Let $\systemNode{A}{B}{C}{\mathbf{G}}$ be a system node with $0 \in \rho(A)$, $B \in \boundedOp{U}{X}$ and $C$ infinite-time $L^1$-observation-admissible. Then $\systemNode{A}{B}{C}{\mathbf{G}}$ is $L^1$-LILO stable.
\end{corollary}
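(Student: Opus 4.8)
The plan is to reduce the LILO bound for a generalised solution to the admissibility hypothesis on $C$ together with the control-admissibility of $B$ in the state space $X$. By Proposition~\ref{prop:CLILOLLILOequiv} it suffices to verify $C^\infty$-LILO stability, so I would start with a classical solution $(u,x,y)$ with $u \in \CinfComp{(0,\infty)}{U}$ and $x(0)=0$. For such inputs the state is given by the variation-of-constants formula $x(t) = \int_0^t \semiGroup{t-s} B u(s)\intd s$, which now takes values in $X$ (not merely $X_{-1}$) because $B \in \boundedOp{U}{X}$ is in particular $L^1$-control-admissible for $X$; moreover, since $u$ is smooth and compactly supported, $x(t) \in \dom(A)$ for every $t$ and $y(t) = C x(t) + \mathbf{G}(\beta)u(t) - C\resolvent{\beta}{A}Bu(t)$ via the $\CandD$ formula, with all terms honest functions.

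The main estimate is then to bound $\|Cx\|_{\lpSpaceDT{1}{[0,t]}{Y}}$. Here I would apply Proposition~\ref{prop:admissibilityMaxReg}: since $C$ is infinite-time $L^1$-observation-admissible and $0 \in \rho(A)$, the operator $v \mapsto C\int_0^\cdot \semiGroup{\cdot-s} v(s)\intd s$ is bounded from $\lpSpaceDT{1}{\posRealNum}{X}$ to $\lpSpaceDT{1}{\posRealNum}{Y}$, say with constant $c_1$. Feeding in $v = Bu$, which lies in $\lpSpaceDT{1}{\posRealNum}{X}$ with $\|Bu\|_{\lpSpaceDT{1}{\posRealNum}{X}} \leq \|B\|\,\|u\|_{\lpSpaceDT{1}{\posRealNum}{U}}$, gives $\|Cx\|_{\lpSpaceDT{1}{\posRealNum}{Y}} \leq c_1\|B\|\,\|u\|_{\lpSpaceDT{1}{\posRealNum}{U}}$. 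The two remaining feedthrough-type terms $\mathbf{G}(\beta)u$ and $C\resolvent{\beta}{A}Bu$ are straightforward: $\mathbf{G}(\beta) \in \boundedOp{U}{Y}$ and $C\resolvent{\beta}{A}B \in \boundedOp{U}{Y}$ (the resolvent maps into $X_1 = \dom(A)$), so both contribute bounds of the form $\mathrm{const}\cdot\|u\|_{\lpSpaceDT{1}{[0,t]}{U}}$. Adding the three contributions yields the desired inequality $\|y\|_{\lpSpaceDT{1}{[0,t]}{Y}} \leq c\|u\|_{\lpSpaceDT{1}{[0,t]}{U}}$ with a constant independent of $t$ and of $u$.

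The step I expect to require the most care is the bookkeeping that connects the distributional output $y$ from \eqref{eq:distributionalOutputDef} to the pointwise formula $y(t) = \CandD\left[\begin{smallmatrix} x(t) \\ u(t)\end{smallmatrix}\right]$, and then identifying $Cx$ with the convolution $C\int_0^\cdot \semiGroup{\cdot-s}Bu(s)\intd s$ to which Proposition~\ref{prop:admissibilityMaxReg} applies. For a classical solution with smooth compactly supported input this identification is essentially \cite[Lem.~4.7.8]{b_Staffans2005}, but one must check that $x(s) \in X$ implies $Bu(s) \in X$ is the right object to plug in (rather than its $X_{-1}$-extension), which is exactly where $B \in \boundedOp{U}{X}$ is used, and that the convolution $C\int_0^\cdot\semiGroup{\cdot-s}Bu(s)\intd s$ is well defined as an $\lpSpaceDT{1}{\posRealNum}{Y}$ function — again guaranteed by Proposition~\ref{prop:admissibilityMaxReg} since $Bu \in \lpSpaceDT{1}{\posRealNum}{X}$. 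Once these identifications are in place, the estimate is immediate, and $C^\infty$-LILO stability — hence by Proposition~\ref{prop:CLILOLLILOequiv} also $L^1$-LILO stability — follows.
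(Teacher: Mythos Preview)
Your proposal is correct and follows essentially the same route as the paper's proof: you reduce to $C^\infty$-LILO stability via Proposition~\ref{prop:CLILOLLILOequiv}, split the output into $Cx$ plus a bounded feedthrough term using the $\CandD$ formula, and invoke Proposition~\ref{prop:admissibilityMaxReg} to bound $\|Cx\|_{L^1}$. The only cosmetic difference is that the paper first reduces to the case $B=\mathbb{I}$, $U=X$ (so the feedthrough becomes $\mathbf{G}(s)-C\resolvent{s}{A}$ acting on $u$ directly), whereas you keep $B$ general and feed $v=Bu$ into Proposition~\ref{prop:admissibilityMaxReg}; this changes nothing of substance.
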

\begin{proof}
    It is clear that it is sufficient to show this for $B = \mathbb{I}$ and $U = X$. In that case, for any $u \in C^\infty_c(\posRealNum, X)$ the output is given by
    \begin{align*}
        y(t) = C \& D \begin{bmatrix}
            x(t) \\ u(t)
        \end{bmatrix} 
        &= C x(t) + \left( \mathbf{G}(s) - C \resolvent{s}{A} \right) u(t).
    \end{align*}
    By Proposition \ref{prop:admissibilityMaxReg} there exists $c > 0$ such that $\left\| C x \right\|_{\lpSpaceDT{1}{\posRealNum}{Y}} \leq c \left\| u \right\|_{\lpSpaceDT{1}{\posRealNum}{X}}$ for all $u \in C^\infty_c(\posRealNum, X)$ and we have $\mathbf{G}(s) - C \resolvent{s}{A} \in \boundedOp{X}{Y}$, so that we can conclude that $\left\| y \right\|_{\lpSpaceDT{1}{\posRealNum}{Y}} \leq (c + \| \mathbf{G}(s) - C \resolvent{s}{A} \|) \left\| u \right\|_{\lpSpaceDT{1}{\posRealNum}{X}}$.
    
    Hence $\systemNode{A}{B}{C}{\mathbf{G}}$ is $C^\infty$-LILO stable and, by Proposition \ref{prop:CLILOLLILOequiv}, also $L^1$-LILO stable.
 \end{proof}

\begin{example}
\label{ex:ShiftLILO}
    Consider again the dual system from Example \ref{ex:dualityCounterexample} which was given by the left-shift on $[0,1]$ with boundary observation at $0$ and identity input, that is by the equations 
    \begin{align}
        \Dot{x} (\xi, t) &= \derOrd{}{\xi}{} x(\xi,t) + u(\xi, t), &
        y(t) &= x(0,t).
    \end{align}
    This is a system node with $A = \derOrd{}{\xi}{}$ the generator of the left-shift semigroup on the interval $[0,1]$ and $\dom(A) = \left\{ x \in H^1\left( [0,1], \realNum \right) : x(1) = 0 \right\} \subset \lpSpaceDT{2}{[0,1]}{\realNum}$, $B = \mathbb{I} \in \boundedOpSelf{\lpSpaceDT{2}{[0,1]}{\realNum}}$ and $C = \cdot |_{0}$.

    Then for $x \in \dom(A)$ and $t\geq0$ we have
    \begin{align*}
        C \semiGroup{t} x = \begin{cases}
            x(t) & 0 \leq t \leq 1 \\
            0 & t > 1
        \end{cases}
    \end{align*}
    and thus in particular
    \begin{align*}
        \| C \semiGroup{\cdot} x \|_{\lpSpaceDT{1}{\posRealNum}{\realNum}} = \| x \|_{\lpSpaceDT{1}{[0,1]}{\realNum}} \leq \| x \|_{\lpSpaceDT{2}{[0,1]}{\realNum}}
    \end{align*}
    showing that the operator $C$ is infinite-time $L^1$-observation-admissible. As the semigroup generated by $A$ is furthermore exponentially stable and thus $0 \in \rho(A)$, we can apply Corollary \ref{cor:CObsAdmLILO} to conclude that this system is $L^1$-LILO stable.
\end{example}

\begin{remark}
\label{rem:BIBOLILOnonEquiv}
    The study of the system discussed above shows that the equivalence of BIBO and LILO stability that holds in the SISO case by Corollary \ref{cor:BIBOLILOequivalence} cannot be true in general as it is $L^1$-LILO stable by Example \ref{ex:ShiftLILO} but not $L^\infty$-BIBO stable by Example \ref{ex:dualityCounterexample}.
\end{remark}
A more extensive dual result to Corollary~\ref{cor:CObsAdmLILO} for control-admissible operators (and a refinement of \cite[Lem.~6.1]{a_SchwenningerWierzbaZwart24BIBO}) can be derived using the following lemma.
\begin{lemma}
\label{lem:convergenceXWeakDual}
    Let $(x_k)_{k \in \natNum} \subset X$ be a sequence in $X$ such that $\| x_k \|_X \leq c$ for some $c > 0$ and all $k \in \natNum$ and which in $X_{-1}$ converges to $x \in X$. 
    Then $\| x \|_X \leq \kappa c$, where $\kappa = \liminf_{\lambda \rightarrow \infty} \| \resolvent{\lambda}{A} \| < \infty$.
\end{lemma}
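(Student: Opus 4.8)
The plan is to test the $X_{-1}$-convergence of $(x_k)$ against the rescaled resolvents $\lambda\resolvent{\lambda}{A}$, extended to the extrapolation space $X_{-1}$; these serve as a bridge between the $X_{-1}$- and $X$-norms. Two ingredients are needed. First --- as is used, for instance, in the proof of Proposition~\ref{prop:CLILOLLILOequiv} --- for every $\lambda$ in the resolvent set the operator $\resolvent{\lambda}{A}$, understood as the extension $(\lambda\mathbb{I}-A_{-1})^{-1}$, belongs to $\boundedOp{X_{-1}}{X}$ and coincides on $X$ with the ordinary resolvent; hence multiplication by $\lambda$ turns $X_{-1}$-convergent sequences into $X$-convergent ones. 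Second, for every $x\in X$ one has $\lambda\resolvent{\lambda}{A}x\to x$ in $X$ as $\lambda\to\infty$; this standard fact holds because on the dense subspace $\dom(A)$ one has $\lambda\resolvent{\lambda}{A}x-x=\resolvent{\lambda}{A}Ax\to 0$, while $\sup_{\lambda>\omega}\|\lambda\resolvent{\lambda}{A}\|_{\boundedOpSelf{X}}<\infty$ for any $\omega>\omega(\mathbb{T})$ by the Hille--Yosida estimates.

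With these in hand I would argue as follows. Fix $\lambda>\omega(\mathbb{T})$. Since $x_k\to x$ in $X_{-1}$ and $\resolvent{\lambda}{A}\in\boundedOp{X_{-1}}{X}$, we get $\lambda\resolvent{\lambda}{A}x_k\to\lambda\resolvent{\lambda}{A}x$ in $X$, so $\|\lambda\resolvent{\lambda}{A}x\|_X=\lim_{k\to\infty}\|\lambda\resolvent{\lambda}{A}x_k\|_X$ by continuity of the norm. On the other hand each $x_k$ lies in $X$, on which $\lambda\resolvent{\lambda}{A}$ restricts to a bounded operator, so $\|\lambda\resolvent{\lambda}{A}x_k\|_X\le\|\lambda\resolvent{\lambda}{A}\|_{\boundedOpSelf{X}}\|x_k\|_X\le\|\lambda\resolvent{\lambda}{A}\|_{\boundedOpSelf{X}}\,c$. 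Combining the two, $\|\lambda\resolvent{\lambda}{A}x\|_X\le\|\lambda\resolvent{\lambda}{A}\|_{\boundedOpSelf{X}}\,c$ for every $\lambda>\omega(\mathbb{T})$. Now let $\lambda\to\infty$: since $x\in X$, the second ingredient and continuity of the norm give $\|x\|_X=\lim_{\lambda\to\infty}\|\lambda\resolvent{\lambda}{A}x\|_X\le\big(\liminf_{\lambda\to\infty}\|\lambda\resolvent{\lambda}{A}\|_{\boundedOpSelf{X}}\big)\,c=\kappa c$.

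That $\kappa<\infty$ is then immediate: for $\omega>\omega(\mathbb{T})$ and $M_\omega$ with $\|\semiGroup{t}\|\le M_\omega e^{\omega t}$ one has $\|\resolvent{\lambda}{A}\|_{\boundedOpSelf{X}}\le M_\omega/(\lambda-\omega)$, hence $\|\lambda\resolvent{\lambda}{A}\|_{\boundedOpSelf{X}}\le M_\omega\lambda/(\lambda-\omega)\to M_\omega$, so $\kappa\le M_\omega$. I expect the only point needing genuine care to be the bookkeeping around the extrapolation space: one must keep track of the identification $(\lambda\mathbb{I}-A_{-1})^{-1}|_X=(\lambda\mathbb{I}-A)^{-1}$, so that the two roles of $\lambda\resolvent{\lambda}{A}$ above --- as a map $X_{-1}\to X$ (to pass to the limit) and as a map $X\to X$ (to exploit $\|x_k\|_X\le c$) --- are played by one and the same operator. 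This identification is exactly where the interplay of the two norms is used; everything else is routine.
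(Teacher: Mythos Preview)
Your proof is correct and takes a genuinely different, more elementary route than the paper's. The paper argues via the sun dual $X^\odot$: from $X_{-1}$-convergence it first deduces that $f(x_k)\to f(x)$ for every $f\in\dom(A^*)=(s\mathbb{I}-A)^{-*}X^*$, extends this by density to all $f\in X^\odot$ using the uniform bound $\|x_k\|_X\le c$, and then invokes the Hille--Phillips result that $X^\odot$ is $\kappa$-norming for $X$, i.e.\ $\tfrac{1}{\kappa}\|x\|_X\le\sup\{|f(x)|:f\in X^\odot,\ \|f\|_{X^*}=1\}$. You bypass the adjoint semigroup entirely by using the rescaled resolvents $\lambda\resolvent{\lambda}{A}$ as approximate identities on $X$; the same constant $\kappa$ emerges, but directly as $\liminf_{\lambda\to\infty}\|\lambda\resolvent{\lambda}{A}\|_{\boundedOpSelf{X}}$ rather than through the norming property of $X^\odot$ --- these are two manifestations of the same Hille--Yosida-type estimate. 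Your argument is shorter and self-contained (and also supplies the verification $\kappa<\infty$, which the paper leaves implicit); the paper's approach, on the other hand, makes the connection to adjoint-semigroup theory explicit, which fits the surrounding duality theme.
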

\begin{proof}
    The convergence $x_k \rightarrow x$ in $X_{-1}$ means that for $s \in \rho(A)$ we have $\resolvent{s}{A} x_k \\ \rightarrow \resolvent{s}{A} x$ in $X$.
    Then we also have for any bounded linear functional $f \in X^*$ that $f \left( \resolvent{s}{A} x_k \right) \rightarrow f\left( \resolvent{s}{A} x \right)$ and thus also $\left( (s\mathbb{I}-A)^{-\ast} f \right) \left(  x_k \right) \rightarrow \left( (s\mathbb{I}-A)^{-\ast} f \right) \left(  x \right)$.

    Now, as $(s\mathbb{I}-A)^{-\ast} X^* = \dom(A^*) \subseteq X^\odot$ densely \cite[Thm.~1.3.1]{b_vanNeerven06_AdjointSemigroup}, we can find for any $f \in X^\odot$ a sequence $(f_k)_{k \in \natNum} \subseteq \dom (A^*)$ such that $f_k \rightarrow f$ in $X^*$. Then
    \begin{align}
        | f(x) - f(x_k) | &\leq | f(x) - f_n(x) | + | f_n(x) - f_n(x_k) | + | f_n(x_k) - f(x_k) | \\
        &\leq \| f - f_n \|_{X^*} \| x \|_X + | f_n(x) - f_n(x_k) | + \| f - f_n \|_{X^*} \| x_k \|_X \\
        &\leq \| f - f_n \|_{X^*} \left( \| x \|_X + c \right) + | f_n(x) - f_n(x_k) |,
    \end{align}
    for arbitrary $n \in \natNum$ and thus we find $f(x_k) \rightarrow f(x)$.

    By \cite[Thm.~14.2.1]{b_HillePhillips57_FunctionalAnalysis} we have
    \begin{equation}
        \frac{1}{\kappa} \| x \|_X \leq \sup_{\substack{f \in X^\odot \\ \| f \|_{X^*} = 1}} | f(x) |
        = \sup_{\substack{f \in X^\odot \\ \| f \|_{X^*} = 1}} \left| \lim_{k \rightarrow \infty} f(x_k) \right| = \sup_{\substack{f \in X^\odot \\ \| f \|_{X^*} = 1}} \lim_{k \rightarrow \infty} \left| f(x_k) \right|,
    \end{equation}
    and as again by \cite[Thm.~14.2.1]{b_HillePhillips57_FunctionalAnalysis} we have $|f(x_k)| \leq \| x_k \|_X \leq c$, we find $\| x \|_X \leq \kappa c$. 
 \end{proof}
The following result shows that if the output equals the state, then BIBO stability and admissibility with respect to continuous functions are equivalent.
\begin{proposition}\label{prop:CBIBOadm}
    A system node $\systemNode{A}{B}{\mathbb{I}}{\resolvent{\cdot}{A} B}$ is $C^\infty$-BIBO stable if and only if $B$ is infinite-time $C$-control-admissible.
\end{proposition}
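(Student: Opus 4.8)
The plan is to reduce both implications to a statement about the single map $\Phi_t\colon u\mapsto\int_0^t\semiGroupEx{t-s}Bu(s)\intd s$. For the node $\systemNode{A}{B}{\mathbb I}{\resolvent{\cdot}{A}B}$, formula~\eqref{eq:defCandDOperator} gives, for $\left[\begin{smallmatrix}x\\u\end{smallmatrix}\right]\in\dom(C\&D)$, that $C\&D\left[\begin{smallmatrix}x\\u\end{smallmatrix}\right]=\bigl(x-\resolvent{\beta}{A}Bu\bigr)+\resolvent{\beta}{A}Bu=x$, so any classical solution $(u,x,y)$ satisfies $y\equiv x$; since such a solution with $x(0)=0$ is also a generalised one, $y(t)=x(t)=\Phi_t u$ for all $t\ge0$. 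Note $\Phi_t u$ depends only on $u|_{[0,t]}$, and a one-line estimate shows $\Phi_t\in\boundedOp{C([0,t],U)}{X_{-1}}$ for each $t$. Under this identification, $C^\infty$-BIBO stability is precisely the existence of $c>0$, \emph{uniform in $t$}, with $\|\Phi_t u\|_X\le c\|u\|_{\lpSpaceDT{\infty}{[0,t]}{U}}$ for all $u\in C^\infty_c((0,\infty),U)$, whereas infinite-time $C$-control-admissibility of $B$ is the same statement with $C^\infty_c((0,\infty),U)$ replaced by all of $C([0,t],U)$, including the assertion $\Phi_t u\in X$.

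The direction ``$\Leftarrow$'' is then immediate: applying the admissibility estimate to $u|_{[0,\tau]}$ for $\tau\le t$ and using that $\tau\mapsto\|u\|_{\lpSpaceDT{\infty}{[0,\tau]}{U}}$ is nondecreasing yields $\|y\|_{\lpSpaceDT{\infty}{[0,t]}{Y}}=\sup_{\tau\le t}\|\Phi_\tau u\|_X\le c\|u\|_{\lpSpaceDT{\infty}{[0,t]}{U}}$ for any classical solution with $u\in C^\infty_c((0,\infty),U)$ and $x(0)=0$ (which exists by \cite[Lem.~4.7.8]{b_Staffans2005}), so the node is $C^\infty$-BIBO stable.

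For ``$\Rightarrow$'' the task is to pass from smooth compactly supported inputs to arbitrary $u\in C([0,t],U)$. I would fix $t$ and $u$ and split $u=u_0+u_1$, where $u_1:=u(0)\,\idMatrix{[0,t]}$ is constant and $u_0:=u-u_1$ vanishes at $0$. For $u_0$ a routine cutoff-and-mollify argument furnishes $w_k\in C^\infty_c((0,\infty),U)$ with $w_k\to u_0$ in $C([0,t],U)$ and bounded sup-norms; by linearity and the $C^\infty$-BIBO bound, $\|\Phi_t w_k-\Phi_t w_m\|_X\le c\|w_k-w_m\|_{\lpSpaceDT{\infty}{[0,t]}{U}}$, so $(\Phi_t w_k)$ is Cauchy \emph{in $X$}, and since it also converges to $\Phi_t u_0$ in $X_{-1}$, its $X$-limit must be $\Phi_t u_0$. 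Hence $\Phi_t u_0\in X$ with $\|\Phi_t u_0\|_X\le c\|u_0\|_{\lpSpaceDT{\infty}{[0,t]}{U}}\le 2c\|u\|_{\lpSpaceDT{\infty}{[0,t]}{U}}$.

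The constant part $u_1$ is the main obstacle: since $u(0)$ need not be $0$, the function $u_1$ cannot be approximated in $C([0,t],U)$ by functions vanishing near $0$, and the Cauchy-in-$X$ trick breaks down. Here I would argue in two steps. First, $\Phi_t u_1=\bigl(\int_0^t\semiGroupEx{r}\intd r\bigr)Bu(0)$ lies in $X$ directly, because $A_{-1}\int_0^t\semiGroupEx{r}z\intd r=(\semiGroupEx{t}-\mathbb I)z\in X_{-1}$ while $\dom(A_{-1})=X$. Second, choosing $\phi_k\in C^\infty_c((0,\infty),[0,1])$ with $\phi_k\equiv1$ on $[\tfrac1k,t]$ and setting $w_k:=\phi_k\,u(0)\in C^\infty_c((0,\infty),U)$, the $C^\infty$-BIBO bound gives $\|\Phi_t w_k\|_X\le c\|u(0)\|$ with $c$ independent of $t$, while $\Phi_t w_k\to\Phi_t u_1$ in $X_{-1}$ because the integrands differ only on $[0,\tfrac1k]$. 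As the limit $\Phi_t u_1$ is already known to lie in $X$, Lemma~\ref{lem:convergenceXWeakDual} applies and yields $\|\Phi_t u_1\|_X\le\kappa c\|u(0)\|$ with $\kappa$ depending only on $A$. Summing the two contributions gives $\Phi_t u\in X$ and $\|\Phi_t u\|_X\le(2+\kappa)c\,\|u\|_{\lpSpaceDT{\infty}{[0,t]}{U}}$ with a constant independent of $t$ and $u$, i.e.\ $B$ is infinite-time $C$-control-admissible. It is exactly this constant/endpoint contribution — the one not reached by the density argument — that forces the combined use of the identity $\int_0^t\semiGroupEx{r}z\intd r\in\dom(A_{-1})=X$ and the weak-type estimate of Lemma~\ref{lem:convergenceXWeakDual}.
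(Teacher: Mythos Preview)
Your proof is correct and uses the same key ingredient as the paper's --- Lemma~\ref{lem:convergenceXWeakDual} --- but organises the forward implication differently. The paper proceeds in three stages: first it extends the $C^\infty_c$-estimate to all of $C_0([0,\infty),U)$ by density (essentially your Cauchy-in-$X$ argument), then passes to $u\in C^1$ by invoking the regularity result \cite[Thm.~3.8.2]{b_Staffans2005} to know a~priori that $x(t)\in X$, approximating $u$ by $C_0$-functions in $L^1_{\loc}$ with controlled sup-norms, and applying Lemma~\ref{lem:convergenceXWeakDual}; finally it closes from $C^1$ to $C$ by density again. Your decomposition $u=u_0+u_1$ into vanishing-at-zero plus constant parts short-circuits this chain: the $u_0$-piece is handled by the same density/Cauchy argument, and for the constant piece $u_1$ you replace the appeal to \cite[Thm.~3.8.2]{b_Staffans2005} by the direct observation $\int_0^t\semiGroupEx{r}Bu(0)\intd r\in\dom(A_{-1})=X$, then invoke Lemma~\ref{lem:convergenceXWeakDual} exactly as the paper does. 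Your route is arguably more self-contained (no external regularity result needed) and makes explicit that the only genuine obstruction is the endpoint value $u(0)$; the paper's route is slightly more uniform in that it treats all of $C^1$ at once rather than isolating the constant direction. Both yield the same constant degradation by the factor $\kappa$ from Lemma~\ref{lem:convergenceXWeakDual}.
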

\begin{proof}
    Assume that $\systemNode{A}{B}{\mathbb{I}}{\resolvent{\cdot}{A} B}$ is $C^\infty$-BIBO stable. Observe first that $C\& D \left[ \begin{smallmatrix}
        x \\ u
    \end{smallmatrix} \right] = x$ and thus we can treat the distributional derivatives in Equation \eqref{eq:distributionalOutputDef} as ordinary derivatives and find $y = x$ for any generalised solution. 
    
    Consider then a classical solution with $u \in C^\infty_c(\posRealNum, U)$ and $x(0) = 0$. Then the BIBO stability implies for any $t \geq 0$ that
    \begin{equation}
    \label{eq:almostAdmissibilityCondition}
        \| x \|_{\lpSpaceDT{\infty}{[0,t]}{X}} = \left\| \int_0^t \semiGroup{t-s} B u(s) \intd s \right\|_{\lpSpaceDT{\infty}{[0,t]}{X}} \leq c \| u \|_{\lpSpaceDT{\infty}{[0,t]}{U}}.
    \end{equation}
    for some $c > 0$ and as by \cite[Thm.~3.8.2]{b_Staffans2005} we have $x \in C(\posRealNum, X)$ we even find 
    \begin{equation}
        \left\| \int_0^t \semiGroup{t-s} B u(s) \intd s \right\|_{X} \leq c \| u \|_{\lpSpaceDT{\infty}{[0,t]}{U}}.
    \end{equation}
    By continuity this extends to all $u \in \overline{C^\infty_c(\posRealNum, U)}^{\lpSpaceDT{\infty}{\posRealNum}{U}} = C_0(\posRealNum, U)$, i.e.\ all continuous inputs vanishing for $t = 0$.

    Now take any $u \in C^1([0,\infty), U)$. Then $B u \in W^{1,1}_\loc(\posRealNum, X_{-1})$ and by \cite[Thm.~3.8.2]{b_Staffans2005} we thus have $x := \int_0^\cdot \semiGroup{\cdot - s} B u(s) \intd s \in C(\posRealNum, X)$.
    
    There is always a sequence $(u_k)_{k \in \natNum}$ in $C_0(\posRealNum, U)$ such that $u_k \rightarrow u$ in $\lpSpacelocDT{1}{\posRealNum}{U}$ and $\| u_k \|_{\lpSpaceDT{\infty}{[0,t]}{[U]}} \leq \| u \|_{\lpSpaceDT{\infty}{[0,t]}{[U]}}$ for any $k \in \natNum$  and $t \geq 0$. By the first part of this proof we have for any $k \in \natNum$ that $x_k := \int_0^\cdot \semiGroup{\cdot - s} B u_k(s) \intd s \in C(\posRealNum, X)$ and $\| x_k \|_{\lpSpaceDT{\infty}{[0,t]}{[U]}} \leq c \| u_k \|_{\lpSpaceDT{\infty}{[0,t]}{[U]}} \leq c \| u \|_{\lpSpaceDT{\infty}{[0,t]}{[U]}}$.

    Furthermore, as $B$ is bounded from $U$ to $X_{-1}$ we have $x_k(t) \rightarrow x(t)$ in $X_{-1}$ and Lemma~\ref{lem:convergenceXWeakDual} then implies that $\| x \|_X \leq \kappa c \| u \|_{\lpSpaceDT{\infty}{[0,t]}{[U]}}$. Again by continuity this inequality then extends to all $u \in \overline{C^1(\posRealNum, U)}^{\lpSpaceDT{\infty}{\posRealNum}{U}} = C(\posRealNum, U)$, i.e.\ $B$ is $C$-control-admissible.

    The converse direction is clear. 
 \end{proof}

\begin{remark}
    It is easy to see that $B$ being infinite-time $C$-control-admissible even implies $C$-BIBO stability of $\systemNode{A}{B}{\mathbb{I}}{\resolvent{\cdot}{A} B}$. On the other hand, it is a natural question to ask whether the analogous result of Proposition \ref{prop:CBIBOadm} holds for $L^{\infty}$.
\end{remark}
\section{Duality between BIBO and LILO stability}
Equipped with the definition of LILO stability we can now show that it indeed serves as the dual notion of BIBO stability when going from a system node $\systemNodeSt$ to its dual $\systemNodeStD$ and vice versa.

To do this we will make use of the following two  lemmas.
\begin{lemma}
\label{lem:classicalSolDualityIntegrals}
    Let $\systemNode{A}{B}{C}{\mathbf{G}}$ be a system node such that $A^*$ generates a $C_0$-semigroup. Then for any any classical solution $(u,x,y)$ of $\systemNode{A}{B}{C}{\mathbf{G}}$ with $x(0) = 0$, any classical solution $(u^d,x^d ,y^d)$ of $\systemNodeStD$ with $x^d(0) = 0$ and any $h > 0$ we have that
    \begin{equation*}
        \int_0^h \innerProd{y(t)}{u^d(h-t)}_{Y,Y^*} \intd t = \int_0^h \innerProd{u(t)}{y^d(h-t)}_{U,U^*} \intd t.
    \end{equation*}
\end{lemma}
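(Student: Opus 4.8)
The plan is to exploit the defining differential equations for the primal and dual classical solutions and integrate by parts in time, using the duality pairing between the combined operators $C\&D$ and its dual. First I would recall that for the classical solution $(u,x,y)$ with $x(0)=0$ we have $\dot{x}(t) = Ax(t) + Bu(t)$ in $X_{-1}$ (equivalently, $\left[\begin{smallmatrix} x(t) \\ u(t) \end{smallmatrix}\right] \in \dom(C\&D)$ with $Ax(t)+Bu(t)\in X$) and $y(t) = C\&D\left[\begin{smallmatrix} x(t) \\ u(t)\end{smallmatrix}\right]$, and symmetrically for the dual solution $(u^d,x^d,y^d)$ of $\systemNodeStD$ with $\dot{x}^d(t) = A^* x^d(t) + C^* u^d(t)$ and $y^d(t) = \overline{\mathbf{G}}(\overline{\,\cdot\,})$-combined operator applied to $\left[\begin{smallmatrix} x^d(t) \\ u^d(t)\end{smallmatrix}\right]$. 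The key algebraic identity is the "Green's formula" for system nodes: for $\left[\begin{smallmatrix} x \\ u\end{smallmatrix}\right]\in\dom(C\&D)$ and $\left[\begin{smallmatrix} z \\ v\end{smallmatrix}\right]\in\dom\big((C\&D)^d\big)$,
\begin{equation*}
    \innerProd{C\&D\begin{bmatrix} x\\u\end{bmatrix}}{v}_{Y,Y^*} - \innerProd{u}{(C\&D)^d\begin{bmatrix} z\\v\end{bmatrix}}_{U,U^*} = \innerProd{Ax+Bu}{z}_{X,X^*} - \innerProd{x}{A^*z + C^*v}_{X,X^*},
\end{equation*}
which follows by writing both combined operators via \eqref{eq:defCandDOperator} with a common $\beta\in\complNum_{\omega(\mathbb{T})}\cap\overline{\complNum_{\omega(\mathbb{T}^*)}}$ (so that $\overline\beta\in\rho(A^*)$), substituting the transfer-function difference relation \eqref{eq:transferFunctionDifferenceRelation}, and cancelling the $\mathbf{G}(\beta)$-terms against each other.

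With this identity in hand, the main computation is to consider the function $t\mapsto \innerProd{x(t)}{x^d(h-t)}_{X,X^*}$ on $[0,h]$. Since $x\in C^1([0,\infty),X)$ and $x^d\in C^1([0,\infty),X^*)$ (both classical solutions), this is $C^1$ and its derivative is $\innerProd{\dot{x}(t)}{x^d(h-t)} - \innerProd{x(t)}{\dot{x}^d(h-t)} = \innerProd{Ax(t)+Bu(t)}{x^d(h-t)} - \innerProd{x(t)}{A^*x^d(h-t)+C^*u^d(h-t)}$, where the pairings make sense because $Ax(t)+Bu(t)\in X$ and $A^*x^d(h-t)+C^*u^d(h-t)\in X^*$. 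By the Green's formula above with $\left[\begin{smallmatrix} z\\v\end{smallmatrix}\right]=\left[\begin{smallmatrix} x^d(h-t)\\u^d(h-t)\end{smallmatrix}\right]$, this derivative equals $\innerProd{y(t)}{u^d(h-t)}_{Y,Y^*} - \innerProd{u(t)}{y^d(h-t)}_{U,U^*}$. Integrating over $[0,h]$ and using $x(0)=0$ and $x^d(0)=0$ (so the boundary term $\innerProd{x(h)}{x^d(0)} - \innerProd{x(0)}{x^d(h)}$ vanishes) yields exactly the claimed identity.

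I expect the main obstacle to be a careful justification of the Green's formula for the combined operators and of the fact that the pairing $\innerProd{\dot{x}(t)}{x^d(h-t)}$ is legitimate — one must check that $\dot x(t)$, a priori an element of $X_{-1}$ via the generalised-solution formula, actually lies in $X$ for a classical solution (it does, since $\left[\begin{smallmatrix} x(t)\\u(t)\end{smallmatrix}\right]\in\dom(C\&D)$ forces $Ax(t)+Bu(t)\in X$), and symmetrically for the dual. A secondary point is to verify that such a common $\beta$ exists, i.e. that $\complNum_{\omega(\mathbb{T})}$ and $\overline{\complNum_{\omega(\mathbb{T}^*)}}$ overlap; this holds because $\omega(\mathbb{T}^*)=\omega(\mathbb{T})$ when $A^*$ generates a $C_0$-semigroup. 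Everything else is a routine product-rule integration, so the structural content is entirely in the Green identity.
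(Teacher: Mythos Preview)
Your proposal is correct and follows essentially the same route as the paper: both establish the pointwise identity $\innerProd{\dot{x}(t)}{x^d(h-t)} + \innerProd{y(t)}{u^d(h-t)} = \innerProd{x(t)}{\dot{x}^d(h-t)} + \innerProd{u(t)}{y^d(h-t)}$ and then integrate over $[0,h]$ using $x(0)=x^d(0)=0$. The only cosmetic difference is that the paper obtains this identity in one line by invoking \cite[Lem.~6.2.14]{b_Staffans2005} (the system operator of the dual node is the Banach-space adjoint $S^*$ of $S=\left[\begin{smallmatrix} A\ B\\ C\&D\end{smallmatrix}\right]$), whereas you propose to derive the same ``Green's formula'' directly from \eqref{eq:defCandDOperator}.
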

\begin{proof}
    Let $\systemNode{A}{B}{C}{\mathbf{G}}$ be as in the lemma and denote by $S = \left[ \begin{smallmatrix}
        A \,\, B \\ C \& D 
    \end{smallmatrix} \right]: \dom(C \& D) \subseteq X \times U \rightarrow X \times Y$ its system operator. Then by \cite[Lem.~6.2.14]{b_Staffans2005} the system operator of the dual system node $\systemNodeStD$ is given by the Banach space adjoint $S^*$.

    Let now $(u,x,y)$ be any classical solution of $\systemNode{A}{B}{C}{\mathbf{G}}$ with $x(0) = 0$ and let $(u^d,x^d ,y^d)$ be any classical solution of $\systemNodeStD$ with $x^d(0) = 0$. Then for $0 \leq t \leq h$ we have
    \begin{equation}
        \begin{bmatrix}
            x(t) \\ u(t)
        \end{bmatrix} \in \dom(S) \quad \andMath \quad \begin{bmatrix}
            x^d(h-t) \\ u^d(h-t)
        \end{bmatrix} \in \dom(S^*) 
    \end{equation}
    and thus in particular
    \begin{equation}
    \begin{split}
        &\innerProd{S \begin{bmatrix}
            x(t) \\ u(t)
        \end{bmatrix}}{\begin{bmatrix}
            x^d(h-t) \\ u^d(h-t)
        \end{bmatrix}}_{\left[ \begin{smallmatrix}
            X \\ Y
        \end{smallmatrix}\right],\left[ \begin{smallmatrix}
            X^* \\ Y^*
        \end{smallmatrix}\right]}  = \innerProd{\begin{bmatrix}
            x(t) \\ u(t)
        \end{bmatrix}}{S^* \begin{bmatrix}
            x^d(h-t) \\ u^d(h-t)
        \end{bmatrix}}_{\left[ \begin{smallmatrix}
            X \\ U
        \end{smallmatrix}\right],\left[ \begin{smallmatrix}
            X^* \\ U^*
        \end{smallmatrix}\right]}.
    \end{split}
    \end{equation}
    Using that $S \left[ \begin{smallmatrix}
        x(t) \\ u(t)
    \end{smallmatrix} \right] = \left[ \begin{smallmatrix}
        \Dot{x}(t) \\ y(t)
    \end{smallmatrix} \right]$ for classical solutions and writing out the components of the duality products gives
    \begin{equation*}
        \begin{split}
            &\innerProd{\Dot{x}(t)}{x^d(h-t)}_{X,X^*} + \innerProd{y(t)}{u^d(h-t)}_{Y,Y^*}\\ &\qquad\qquad\qquad\qquad  = \innerProd{x(t)}{\Dot{x}^d(h-t)}_{X,X^*} + \innerProd{u(t)}{y^d(h-t)}_{U,U^*},
        \end{split}
    \end{equation*}
    which, upon integrating from $t=0$ to $t=h$ and employing the initial conditions yields
    \begin{equation*}
        \int_0^h \innerProd{y(t)}{u^d(h-t)}_{Y,Y^*} \intd t = \int_0^h \innerProd{u(t)}{y^d(h-t)}_{U,U^*} \intd t.
    \end{equation*} 
 \end{proof}

\begin{lemma}
\label{lem:smoothSolDerivativeSol}
    Let $(u,x,y)$ be a classical solution of the system node $\systemNodeSt$ with $x(0) = 0$ and $u \in C_c^\infty(\posRealNum, U)$.
    Then $(u'', x'', y'')$ is also a classical solution of $\systemNodeSt$ with $x''(0) = 0$.
\end{lemma}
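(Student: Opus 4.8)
The plan is to establish the apparently weaker single-derivative version — if $(v,z,w)$ is a classical solution of $\systemNodeSt$ with $z(0)=0$ and $v\in C_c^\infty(\posRealNum,U)$ (so that $v$ and $\dot v$ vanish at $0$), then $(\dot v,\dot z,\dot w)$ is again such a solution, with $\dot z(0)=0$ — and then iterate it; this is legitimate because $\dot u,\ddot u\in C_c^\infty(\posRealNum,U)$ and each application reproduces the hypothesis $\dot z(0)=0$ (indeed $\dot z(0)=A_{-1}z(0)+Bv(0)=0$). For the single-derivative claim I would first use that a classical solution is a generalised one, so $z(t)=\int_0^t\semiGroupEx{t-s}Bv(s)\intd s$; differentiating this convolution in $X_{-1}$ (legitimate since $v$ is smooth and $\semiGroupEx{\cdot}$ strongly continuous) and using $v(0)=0$ yields $\dot z(t)=\int_0^t\semiGroupEx{t-s}B\dot v(s)\intd s$. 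Because $\dot z(t)\in X$ for every $t$ by the classical-solution property, this identity in fact holds in $X$, so $\dot z$ is exactly the generalised-solution state for input $\dot v$ with zero initial value, and it is continuous into $X$.

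The next step is to upgrade $\dot z$ to the state of a genuine classical solution. Since $\dot v\in W^{2,1}_{\loc}(\posRealNum,U)$ and $\left[\begin{smallmatrix}0\\\dot v(0)\end{smallmatrix}\right]=\left[\begin{smallmatrix}0\\0\end{smallmatrix}\right]\in\dom(\CandD)$, the existence result \cite[Lem.~4.7.8]{b_Staffans2005} furnishes a classical solution $(\dot v,\hat z,\hat w)$ with $\hat z(0)=0$; its state is again $\int_0^t\semiGroupEx{t-s}B\dot v(s)\intd s$ (classical solutions being generalised ones), so $\hat z=\dot z$. Hence $\dot z\in C^1(\posRealNum,X)$, i.e.\ $z\in C^2(\posRealNum,X)$, and moreover $\left[\begin{smallmatrix}\dot z(t)\\\dot v(t)\end{smallmatrix}\right]\in\dom(\CandD)$ and $\hat w(t)=\CandD\left[\begin{smallmatrix}\dot z(t)\\\dot v(t)\end{smallmatrix}\right]$ for all $t$. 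It remains to check $\dot w=\hat w$. Fix $\beta\in\complNum_{\omega(\mathbb{T})}$; by \eqref{eq:defCandDOperator}, $w=C\phi+\mathbf{G}(\beta)v$ and $\hat w=C\hat\phi+\mathbf{G}(\beta)\dot v$, where $\phi:=z-\resolvent{\beta}{A}Bv\in X_1$ and $\hat\phi:=\dot z-\resolvent{\beta}{A}B\dot v\in X_1$ (membership in $X_1$ is precisely what makes $\CandD$ well-defined on these vectors). Applying $\beta\mathbb{I}-A_{-1}$ and substituting the state equations $\dot z=A_{-1}z+Bv$ and $\ddot z=A_{-1}\dot z+B\dot v$ makes the resolvent terms cancel, yielding $\phi=\resolvent{\beta}{A}(\beta z-\dot z)$ and $\hat\phi=\resolvent{\beta}{A}(\beta\dot z-\ddot z)$. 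Since $z\in C^2(\posRealNum,X)$ and $\resolvent{\beta}{A}\colon X\to X_1$ is bounded, $\phi\in C^1(\posRealNum,X_1)$ with $\dot\phi=\resolvent{\beta}{A}(\beta\dot z-\ddot z)=\hat\phi$, so $w\in C^1(\posRealNum,Y)$ with $\dot w=C\dot\phi+\mathbf{G}(\beta)\dot v=\hat w$. Thus $(\dot v,\dot z,\dot w)=(\dot v,\dot z,\hat w)$ is a classical solution with $\dot z(0)=0$, and iterating gives the lemma.

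I expect the one genuinely delicate point to be this regularity upgrade. By definition a classical solution only guarantees $z\in C^1(\posRealNum,X)$, but differentiating the output $w=C\phi+\mathbf{G}(\beta)v$ requires $\phi$ to be $C^1$ \emph{into $X_1$}, equivalently $z\in C^2(\posRealNum,X)$, since $C$ is unbounded on $X$; this is exactly what applying the existence theorem \cite[Lem.~4.7.8]{b_Staffans2005} to the smoother input $\dot v$ delivers. Granting that, the remaining ingredients — differentiating the convolution and cancelling the resolvent terms via \eqref{eq:defCandDOperator} — are routine.
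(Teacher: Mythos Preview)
Your argument is correct. The overall skeleton --- invoke the existence result \cite[Lem.~4.7.8]{b_Staffans2005} for the differentiated input, identify the state by differentiating the variation-of-constants formula, and then match the outputs --- coincides with the paper's, but the two proofs diverge in how the output identification is carried out. The paper argues via the distributional output formula~\eqref{eq:distributionalOutputDef}: since $u$, $u'$, $x$ and $x'$ all vanish at $t=0$, one has $\int_0^t (t-s)\left[\begin{smallmatrix}x''(s)\\u''(s)\end{smallmatrix}\right]\intd s=\left[\begin{smallmatrix}x(t)\\u(t)\end{smallmatrix}\right]$, so that applying $\CandD$ gives $y(t)$ and hence the distributional output associated with $(u'',x'')$ is the distributional second derivative of $y$; as a classical solution exists for the input $u''$, this distributional output coincides with the classical one $\widetilde{y}$. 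Your route instead bootstraps $z\in C^2(\posRealNum,X)$ from the classical solution for $\dot v$, rewrites $\phi=\resolvent{\beta}{A}(\beta z-\dot z)$ to exhibit $\phi\in C^1(\posRealNum,X_1)$, and differentiates the defining formula~\eqref{eq:defCandDOperator} for $\CandD$ directly. The paper's version is shorter and handles the second derivative in one step without iterating, while yours makes the $X_1$-regularity behind the differentiability of $y$ explicit and avoids appealing to the generalised-solution output formula at all.
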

\begin{proof}
As $u'' \in C_c^\infty(\posRealNum, U)$, there exists a classical solution $(u'',\widetilde{x}, \widetilde{y})$.
Since
    \begin{equation*}
        \begin{split}
            x'(t) &= \mathbb{T}(t) B u(0) + \int_0^t \mathbb{T}(s) B u'(t-s) \intd s = \int_0^t \mathbb{T}(s) B u'(t-s) \intd s \\
            x''(t) &= \int_0^t \mathbb{T}(s) B u''(t-s) \intd s \\
        \end{split}
    \end{equation*}
we can conclude that $\widetilde{x} = x''$ is indeed the corresponding state of the solution, and as we can write
\begin{equation*}
        \begin{split}
            y''(t) &= \derOrd{}{t}{2} \CandD \begin{bmatrix}
                x(t) \\ u(t) 
            \end{bmatrix} = \derOrd{}{t}{2} \CandD \int_0^t (t - s) \begin{bmatrix}
                x''(s) \\ u''(s)
            \end{bmatrix} \intd s,
        \end{split}
    \end{equation*}
we find that $y''$ is the distributional and thus also the classical output $\widetilde{y} = y''$. 
 \end{proof}
Having established these two lemmas allows us to derive two duality results, going from LILO to BIBO stability and vice versa.
\begin{theorem}
\label{thm:CinfLILOImplDualLInfBIBO}
    Let $\systemNode{A}{B}{C}{\mathbf{G}}$ be a $C^\infty$-LILO stable system node such that $A^*$ generates a $C_0$-semigroup. Then the dual system node $\systemNodeStD$ is $C^\infty$-BIBO stable.

    If in addition the space $U$ has the Radon-Nikod\'{y}m property, then $\systemNodeStD$ is $L^\infty$-BIBO stable.
\end{theorem}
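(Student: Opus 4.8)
The plan is to deduce both statements from the duality identity of Lemma~\ref{lem:classicalSolDualityIntegrals}, which relates the input--output pairing of $\systemNode{A}{B}{C}{\mathbf{G}}$ to that of $\systemNodeStD$, thereby turning the $C^\infty$-LILO estimate for the original node into a BIBO estimate for the dual one via a duality/density argument.

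For the first assertion, fix a classical solution $(u^d,x^d,y^d)$ of $\systemNodeStD$ with $u^d\in\CinfComp{(0,\infty)}{Y^*}$ and $x^d(0)=0$ (such solutions exist for smooth compactly supported inputs), and fix $h>0$. For an arbitrary $u\in\CinfComp{(0,h)}{U}$ let $(u,x,y)$ be the classical solution of $\systemNode{A}{B}{C}{\mathbf{G}}$ with $x(0)=0$; then Lemma~\ref{lem:classicalSolDualityIntegrals} gives
\[
\int_0^h \innerProd{u(t)}{y^d(h-t)}_{U,U^*}\intd t = \int_0^h \innerProd{y(t)}{u^d(h-t)}_{Y,Y^*}\intd t .
\]
Estimating the right-hand side by H\"older's inequality and then by $C^\infty$-LILO stability of $\systemNode{A}{B}{C}{\mathbf{G}}$ yields $\bigl|\int_0^h \innerProd{u(t)}{y^d(h-t)}\intd t\bigr|\le\|y\|_{\lpSpaceDT{1}{[0,h]}{Y}}\|u^d\|_{\lpSpaceDT{\infty}{[0,h]}{Y^*}}\le c\|u\|_{\lpSpaceDT{1}{[0,h]}{U}}\|u^d\|_{\lpSpaceDT{\infty}{[0,h]}{Y^*}}$. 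Hence $u\mapsto\int_0^h \innerProd{u(t)}{y^d(h-t)}\intd t$ is a functional of norm at most $c\|u^d\|_{\lpSpaceDT{\infty}{[0,h]}{Y^*}}$ on the dense subspace $\CinfComp{(0,h)}{U}\subset\lpSpaceDT{1}{[0,h]}{U}$; testing it against $\varphi_n v$, with $\varphi_n$ scalar mollifiers concentrating at an arbitrary $\tau\in(0,h)$ and $v\in U$, $\|v\|_U=1$, and using that $y^d$ is continuous, recovers $\sup_{\tau\in[0,h]}\|y^d(\tau)\|_{U^*}$ as the norm of this functional. Thus $\|y^d\|_{\lpSpaceDT{\infty}{[0,h]}{U^*}}\le c\|u^d\|_{\lpSpaceDT{\infty}{[0,h]}{Y^*}}$, and as $h$ was arbitrary, $\systemNodeStD$ is $C^\infty$-BIBO stable.

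For the second assertion, suppose in addition that $U$ has the Radon-Nikod\'{y}m property. Let $(u^d,x^d,y^d)$ be a generalised distributional solution of $\systemNodeStD$ with $u^d\in\lpSpacelocDT{\infty}{\posRealNum}{Y^*}$ and $x^d(0)=0$, fix $h>0$, and---the generalised output on $[0,h]$ depending only on $u^d|_{[0,h]}$---assume $u^d$ supported in $[0,h]$. Mollifying $u^d$ with a causal mollifier yields smooth $u^d_\epsilon$ with $u^d_\epsilon(0)=0$, $\|u^d_\epsilon\|_{\lpSpaceDT{\infty}{[0,h]}{Y^*}}\le\|u^d\|_{\lpSpaceDT{\infty}{[0,h]}{Y^*}}$, and $u^d_\epsilon\to u^d$ in $\lpSpaceDT{1}{[0,h]}{Y^*}$; to these correspond classical solutions $(u^d_\epsilon,x^d_\epsilon,y^d_\epsilon)$, to which the bound of the first part applies, so $\|y^d_\epsilon\|_{\lpSpaceDT{\infty}{[0,h]}{U^*}}\le c\|u^d\|_{\lpSpaceDT{\infty}{[0,h]}{Y^*}}$ uniformly in $\epsilon$. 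Since $C^*\in\boundedOp{Y^*}{(X^*)_{-1}}$, the convergence argument from the proof of Proposition~\ref{prop:CLILOLLILOequiv} carries over to $\systemNodeStD$ and gives $y^d_\epsilon\to y^d$ in the sense of distributions. Therefore $(y^d_\epsilon)$ is a norm-bounded net in $\lpSpaceDT{\infty}{[0,h]}{U^*}$, regarded inside $\lpSpaceDT{1}{[0,h]}{U}^*$, converging weak-$*$ to $y^d$, so $y^d$ defines an element of $\lpSpaceDT{1}{[0,h]}{U}^*$ of dual norm at most $c\|u^d\|_{\lpSpaceDT{\infty}{[0,h]}{Y^*}}$; by the Radon-Nikod\'{y}m hypothesis this functional is represented by a genuine strongly measurable, essentially bounded $U^*$-valued function, whence $y^d\in\lpSpaceDT{\infty}{[0,h]}{U^*}$ with $\|y^d\|_{\lpSpaceDT{\infty}{[0,h]}{U^*}}\le c\|u^d\|_{\lpSpaceDT{\infty}{[0,h]}{Y^*}}$. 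As $h>0$ was arbitrary, $\systemNodeStD$ is $L^\infty$-BIBO stable.

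I expect the main obstacle to be this last identification: a priori $y^d$ is only a $U^*$-valued distribution, and one must show it is an honest strongly measurable, essentially bounded function---which is exactly where the Radon-Nikod\'{y}m property enters, turning the abstract dual $\lpSpaceDT{1}{[0,h]}{U}^*$, in which the weak-$*$ limit naturally lives, into the Bochner space $\lpSpaceDT{\infty}{[0,h]}{U^*}$ (without it one controls $y^d$ only as a weak-$*$ measurable object). Subsidiary technical points are the careful bookkeeping of the vector-valued test-function spaces when passing between the distributional output and the functional on $\lpSpaceDT{1}{[0,h]}{U}$, the endpoint behaviour of the mollification, and checking that the convergence lemma behind Proposition~\ref{prop:CLILOLLILOequiv} transfers to $\systemNodeStD$---which relies on the standing assumption that $A^*$ generates a $C_0$-semigroup, so that $\systemNodeStD$ is itself a system node with a generalised solution theory.
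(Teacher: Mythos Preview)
Your approach is correct and differs from the paper's mainly in how the $L^\infty$-BIBO assertion is reached. For $C^\infty$-BIBO you apply Lemma~\ref{lem:classicalSolDualityIntegrals} directly to a pair of classical solutions and extract $\|y^d\|_{L^\infty}$ by testing against localized bumps; this is more elementary than the paper's route, which works from the start with arbitrary $u^d\in L^\infty_{\loc}$, replaces the possibly non-classical dual solution by its double-integrated classical version $\bigl(L^d,K^d,\CandD\left[\begin{smallmatrix}K^d\\L^d\end{smallmatrix}\right]\bigr)$, and compensates on the primal side with $(u'',x'',y'')$ via Lemma~\ref{lem:smoothSolDerivativeSol}. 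That upfront investment is what pays off in the second assertion: having the functional bound $|\langle y^d,u\rangle|\le c\,\|u\|_{L^1}\|u^d\|_{L^\infty}$ already for every $u^d\in L^\infty_{\loc}$, the Radon--Nikod\'ym step is a one-liner. You instead approximate $u^d$ by mollification, carry the classical bound through the limit using the distributional convergence machinery of Proposition~\ref{prop:CLILOLLILOequiv} together with weak-$*$ compactness in $\lpSpaceDT{1}{[0,h]}{U}^*$, and only then invoke RNP; this is valid but trades Lemma~\ref{lem:smoothSolDerivativeSol} for the heavier limit argument. One minor point to tidy: a causal mollifier yields $u^d_\epsilon(0)=0$ but not $u^d_\epsilon\in C^\infty_c((0,\infty),Y^*)$, so either shift the mollifier's support into $(0,\infty)$, or note that the duality estimate in your first step actually holds for any classical dual solution with $x^d(0)=0$, not only those with compactly supported input.
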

\begin{proof}
    Let $\Sigma := \systemNodeSt$ be as in the Proposition and $\Sigma^d := \systemNodeStD$ the dual system node.
    Consider now a generalised solution $(u^d, x^d, y^d)$ of $\Sigma^d$ with $x^d(0) = 0$ and $u^d \in \lpSpacelocDT{\infty}{\posRealNum}{Y^*}$. 
    
    Let $u \in C^\infty_c([0,h],U)$ and denote by $(u,x,y)$ the classical solution of $\Sigma$ with $x(0) = 0$. Then defining $\left[ \begin{smallmatrix}
        K^d(t) \\ L^d(t)
    \end{smallmatrix} \right] = \int_0^t (t - s) \left[ \begin{smallmatrix}
        x^d(t) \\ u^d(t)
    \end{smallmatrix} \right] \intd s$ and noting that by \cite[Lem.~4.7.9]{b_Staffans2005} we know that $(L^d, K^d, \CandD \left[ \begin{smallmatrix}
        K^d(t) \\ L^d(t)
    \end{smallmatrix} \right])$ is a classical solution of $\Sigma^d$, we have that
    \begin{align*}
        &\left| \int_0^h \innerProd{u''(h - t)}{\CandD \begin{bmatrix}
            K^d(t) \\ L^d(t)
        \end{bmatrix} }_{U,U^*} \intd t \right| = \left| \int_0^h \innerProd{y''(h - t)}{L^d(t)}_{Y,Y^*} \intd t \right| \\ &\qquad= \left| \int_0^h \innerProd{y(h - t)}{u^d(t)}_{Y,Y^*} \intd t \right| \leq \| y \|_{\lpSpaceDT{1}{[0,h]}{Y}} \| u^d \|_{\lpSpaceDT{\infty}{[0,h]}{Y^*}} \\ &\qquad\leq c \, \| u \|_{\lpSpaceDT{1}{[0,h]}{U}} \| u^d \|_{\lpSpaceDT{\infty}{[0,h]}{Y^*}},
    \end{align*}
    by Lemmas~\ref{lem:classicalSolDualityIntegrals}~and~\ref{lem:smoothSolDerivativeSol} and the assumed LILO stability with constant $c > 0$.

    Now, $\int_0^h \innerProd{u''(h - t)}{\CandD \begin{bmatrix}
            K^d(t) \\ L^d(t)
        \end{bmatrix} }_{U,U^*} \intd t$ is just the action $\innerProd{y^d}{u(h - \cdot)}$ of the distribution $y^d$ on the test function $u(h - \cdot)$. Hence by the above estimate we find that
        \begin{align*}
           \sup_{\substack{u \in C^\infty_c([0,h],U) \\ \| u \|_{\lpSpaceDT{1}{[0,h]}{U}} = 1}} \left| \innerProd{y^d}{u} \right| \leq  c \, \| u^d \|_{\lpSpaceDT{\infty}{[0,h]}{Y^*}},
        \end{align*}
        and thus by density of $C^\infty_c([0,h],U)$ in $\lpSpaceDT{1}{[0,h]}{U}$ also that 
    \begin{align}
    \label{eq:normingEstimateBIBO}
           \sup_{\substack{u \in \lpSpaceDT{1}{[0,h]}{U} \\ \| u \|_{\lpSpaceDT{1}{[0,h]}{U}} = 1}} \left| \innerProd{y^d}{u} \right| \leq  c \, \| u^d \|_{\lpSpaceDT{\infty}{[0,h]}{Y^*}}.
        \end{align}
    Now for all $u^d \in \CinfComp{\posRealNum}{Y^*}$ we know that $y^d \in C( \posRealNum, U^* )$ and thus in particular $y^d \in \lpSpaceDT{\infty}{[0,h]}{U^*}$. Then Equation~\eqref{eq:normingEstimateBIBO} implies by \cite[Lem.~6.8]{a_WeissObservationOperators1989} that $\| y^d \| \leq  c \, \| u^d \|_{\lpSpaceDT{\infty}{[0,h]}{Y^*}}$ rendering $\Sigma^d$ $C^\infty$-BIBO stable.

    If $U$ further satisfies the Radon-Nikod\'{y}m property then for any $u^d \in \lpSpacelocDT{\infty}{\posRealNum}{Y^*}$ Equation~\eqref{eq:normingEstimateBIBO} already implies that
    $y^d \in \left( \lpSpaceDT{1}{[0,h]}{U} \right)^* = \lpSpaceDT{\infty}{[0,h]}{U^*}$ and $\| y^d \| \leq  c \, \| u^d \|_{\lpSpaceDT{\infty}{[0,h]}{Y^*}}$ \cite[Thm.~1.3.10]{b_AnalysisInBanachSpaces1}. Hence $\Sigma^d$ is $L^\infty$-BIBO stable.    
    \end{proof}

\begin{theorem}
    \label{thm:LinfBIBODualCInfLILO}
    Let $\systemNode{A}{B}{C}{\mathbf{G}}$ be a $L^\infty$-BIBO stable system node such that $A^*$ generates a $C_0$-semigroup. Then the dual system node $\systemNodeStD$ is $L^1$-LILO stable.
\end{theorem}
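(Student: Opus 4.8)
The plan is to mirror the proof of Theorem~\ref{thm:CinfLILOImplDualLInfBIBO}, interchanging the roles of BIBO and LILO stability and of the exponents $\infty$ and $1$. Write $\Sigma := \systemNode{A}{B}{C}{\mathbf{G}}$ and $\Sigma^d := \systemNodeStD$. By Proposition~\ref{prop:CLILOLLILOequiv}, applied to the (by hypothesis well-defined) dual system node $\Sigma^d$, it suffices to show that $\Sigma^d$ is $C^\infty$-LILO stable. So I fix $h > 0$ and a classical solution $(u^d, x^d, y^d)$ of $\Sigma^d$ with $x^d(0) = 0$ and $u^d \in C^\infty_c((0,\infty), Y^*)$. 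Being a classical solution, $y^d \in C([0,\infty), U^*)$, hence $y^d \in L^1_{\loc}(\posRealNum, U^*)$ automatically, and all that remains is the norm estimate $\|y^d\|_{\lpSpaceDT{1}{[0,h]}{U^*}} \le c \|u^d\|_{\lpSpaceDT{1}{[0,h]}{Y^*}}$ with $c$ independent of $u^d$ and $h$.

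To produce this estimate I would test $y^d$ against the inputs of the primal system. For an arbitrary $u \in C^\infty_c((0,h), U)$ let $(u, x, y)$ be the classical solution of $\Sigma$ with $x(0) = 0$. Being also a generalised solution, it satisfies the $L^\infty$-BIBO estimate $\|y\|_{\lpSpaceDT{\infty}{[0,h]}{Y}} \le c \|u\|_{\lpSpaceDT{\infty}{[0,h]}{U}}$ with the fixed BIBO constant $c$. Applying Lemma~\ref{lem:classicalSolDualityIntegrals} to $(u,x,y)$ and $(u^d, x^d, y^d)$, reindexing, and using H\"older's inequality with the pair $(\infty, 1)$ gives
\begin{align*}
    \left| \int_0^h \innerProd{u(h-s)}{y^d(s)}_{U,U^*} \intd s \right|
    &= \left| \int_0^h \innerProd{y(s)}{u^d(h-s)}_{Y,Y^*} \intd s \right| \\
    &\le \|y\|_{\lpSpaceDT{\infty}{[0,h]}{Y}} \, \|u^d\|_{\lpSpaceDT{1}{[0,h]}{Y^*}}
    \le c \, \|u\|_{\lpSpaceDT{\infty}{[0,h]}{U}} \, \|u^d\|_{\lpSpaceDT{1}{[0,h]}{Y^*}}.
\end{align*}
As $u$ runs through $C^\infty_c((0,h), U)$ so does $v := u(h - \cdot)$, with the same supremum norm, so this reads $\bigl| \int_0^h \innerProd{v(s)}{y^d(s)}_{U,U^*} \intd s \bigr| \le c \, \|u^d\|_{\lpSpaceDT{1}{[0,h]}{Y^*}} \, \|v\|_{\lpSpaceDT{\infty}{[0,h]}{U}}$ for every $v \in C^\infty_c((0,h), U)$.

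The remaining step --- extracting $\|y^d\|_{\lpSpaceDT{1}{[0,h]}{U^*}}$ from this family of dual estimates --- is the counterpart of the place in Theorem~\ref{thm:CinfLILOImplDualLInfBIBO} where \cite[Lem.~6.8]{a_WeissObservationOperators1989} is invoked, and I expect it to be the only point that needs care. Here I would use that $y^d$ is continuous: partition $[0,h]$ finely, pick by Hahn--Banach unit vectors of $U$ almost realising $\|y^d(\cdot)\|_{U^*}$ at the sample points, form the associated step function and mollify it into $C^\infty_c((0,h), U)$ without increasing its supremum norm; letting the mesh tend to $0$ recovers $\int_0^h \|y^d(s)\|_{U^*} \intd s$ as the supremum of the left-hand sides above over $\|v\|_{\lpSpaceDT{\infty}{[0,h]}{U}} \le 1$. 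This uses only pointwise Hahn--Banach (finitely many times) and the continuity of $y^d$, so no separability or Radon--Nikod\'{y}m hypothesis on $U$ is required. Combining, $\|y^d\|_{\lpSpaceDT{1}{[0,h]}{U^*}} \le c \, \|u^d\|_{\lpSpaceDT{1}{[0,h]}{Y^*}}$, whence $\Sigma^d$ is $C^\infty$-LILO and, by Proposition~\ref{prop:CLILOLLILOequiv}, $L^1$-LILO stable. Note that, unlike in Theorem~\ref{thm:CinfLILOImplDualLInfBIBO}, no double-integral regularisation of the dual solution (and thus not Lemma~\ref{lem:smoothSolDerivativeSol}) is needed, since reducing to $C^\infty$-LILO stability lets one work directly with classical dual solutions, whose outputs are continuous.
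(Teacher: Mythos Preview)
Your argument is correct and shares the skeleton of the paper's proof: reduce to $C^\infty$-LILO stability of $\Sigma^d$ via Proposition~\ref{prop:CLILOLLILOequiv}, then pass the BIBO estimate through the duality identity of Lemma~\ref{lem:classicalSolDualityIntegrals}. The difference lies in how the hypotheses of that lemma are arranged. The paper tests $y^d$ against all $u\in\lpSpaceDT{\infty}{[0,h]}{U}$; since the resulting primal trajectory is only a generalised solution, it regularises both sides---replacing $(u,x,y)$ by its second antiderivative $\bigl(L,K,\CandD\left[\begin{smallmatrix}K\\L\end{smallmatrix}\right]\bigr)$ and differentiating the dual solution twice via Lemma~\ref{lem:smoothSolDerivativeSol}---after which \cite[Lem.~6.8]{a_WeissObservationOperators1989} delivers the $L^1$-norm directly. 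You instead restrict the primal test inputs to $C^\infty_c((0,h),U)$, so both trajectories are classical from the outset and Lemma~\ref{lem:classicalSolDualityIntegrals} applies without any regularisation (in particular without Lemma~\ref{lem:smoothSolDerivativeSol}); the price is supplying by hand the norming step that smooth compactly supported $U$-valued functions of sup norm at most one already recover $\|y^d\|_{\lpSpaceDT{1}{[0,h]}{U^*}}$, which is indeed elementary given the continuity of $y^d$. Your route is a little leaner here, while the paper's has the advantage of citing the norming step as a black box and remaining structurally parallel to the proof of Theorem~\ref{thm:CinfLILOImplDualLInfBIBO}. (A minor remark: the norming vectors you pick come from the definition of the dual norm rather than from Hahn--Banach proper, since you are norming elements of $U^*$ by $U$ and not the other way round.)
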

\begin{proof}
    Let $\Sigma := \systemNodeSt$ be as in the Proposition and $\Sigma^d := \systemNodeStD$ the dual system node.
    Consider then a classical solution $(u^d, x^d, y^d)$ of $\Sigma^d$ with $x^d(0) = 0$ and $u^d \in C^\infty_c(\posRealNum, Y^*)$. Then by \cite[Lem.~6.8]{a_WeissObservationOperators1989} we have that
    \begin{align*}
        \| y^d &\|_{\lpSpaceDT{1}{[0,h]}{U^*}} = 
         \sup_{\substack{u \in \lpSpaceDT{\infty}{[0,h]}{U} \\ \| u \|_{\lpSpaceDT{\infty}{[0,h]}{U}} = 1}} \left| \int_0^h \innerProd{u(t)}{y^d(h - t)}_{U,U^*} \intd t \right| 
    \end{align*}
    Considering then the solution $(u,x,y)$ of $\Sigma$ with and defining $\left[ \begin{smallmatrix}
        K(t) \\ L(t)
    \end{smallmatrix} \right] = \int_0^t (t - s) \left[ \begin{smallmatrix}
        x(t) \\ u(t)
    \end{smallmatrix} \right] \intd s$ we have by \cite[Lem.~4.7.9]{b_Staffans2005} that $\left( L, K, \CandD \left[ \begin{smallmatrix}
        K \\ L
    \end{smallmatrix} \right] \right)$ is a classical solution of $\Sigma$ and by Lemma~\ref{lem:smoothSolDerivativeSol} that $\left( (u^d)'', (x^d)'', (y^d)'' \right)$ is a classical solution of $\Sigma^d$. 
    Rewriting $u(t) = L''(t)$ we can apply Lemma~\ref{lem:classicalSolDualityIntegrals} and the $L^\infty$-BIBO stability with constant $c>0$ to find
    \begin{align*}
        \| y^d \|_{\lpSpaceDT{1}{[0,h]}{U^*}}
        &= \sup_{\substack{u \in \lpSpaceDT{\infty}{[0,h]}{U} \\ \| u \|_{\lpSpaceDT{\infty}{[0,h]}{U}} = 1}} \left| \int_0^h \innerProd{L(t)}{(y^d)''(h - t)}_{U,U^*} \intd t \right| \\
        &= \sup_{\substack{u \in \lpSpaceDT{\infty}{[0,h]}{U} \\ \| u \|_{\lpSpaceDT{\infty}{[0,h]}{U}} = 1}} \left| \int_0^h \innerProd{C\&D \begin{bmatrix}
            K(t) \\ L(t)
        \end{bmatrix}}{(u^d)''(h - t)}_{Y,Y^*} \intd t \right| \\
        &= \sup_{\substack{u \in \lpSpaceDT{\infty}{[0,h]}{U} \\ \| u \|_{\lpSpaceDT{\infty}{[0,h]}{U}} = 1}} \left| \int_0^h \innerProd{ y(t)}{u^d(h - t)}_{Y,Y^*} \intd t \right| \\
        &\leq \sup_{\substack{u \in \lpSpaceDT{\infty}{\posRealNum}{U} \\ \| u \|_{\lpSpaceDT{\infty}{[0,h]}{U}} = 1}} \| y \|_{\lpSpaceDT{\infty}{[0,h]}{Y}} \| u^d \|_{\lpSpaceDT{1}{[0,h]}{Y^*}} \leq c \, \| u^d \|_{\lpSpaceDT{1}{[0,h]}{Y^*}}.
    \end{align*}
    Hence $\Sigma^d$ is $C^\infty$-LILO stable and by Proposition~\ref{prop:CLILOLLILOequiv} also $L^1$-LILO stable. 
 \end{proof}

\section{Outlook}
The results from this note show that the result from finite-dimensional theory --- that BIBO stability of a system implies BIBO stability of the dual system --- does not hold in general for system nodes built on infinite-dimensional spaces. 
Instead it demonstrates that the analogously defined notions of $C^\infty$- and $L^1$-LILO stability provide dual notions to BIBO stability. 

In the SISO case, i.e.\ when both the input and the output space are finite-dimensional, it turns out that all four studied notions --- $C^\infty$-BIBO, $L^\infty$-BIBO, $C^\infty$-LILO and $L^1$-LILO stability --- are characterised by the transfer function being the Laplace transform of a measure of bounded total variation and are thus equivalent. Moreover, in this situation, they are also equivalent to the corresponding properties of the dual system should it exist. Hence for these particular systems the finite-dimensional observation does carry over.  

For the general case the relation between the different BIBO and LILO stability notions --- both of the original system and under duality transformations --- is more complicated. Figure~\ref{fig:BIBOLILIOrelations} summarises the results as presented in this contribution. Note in particular that --- in contrast to the LILO stability case --- it remains an open question so far whether or not $C^\infty-$, $C$- and $L^\infty$-BIBO stability are equivalent as well.
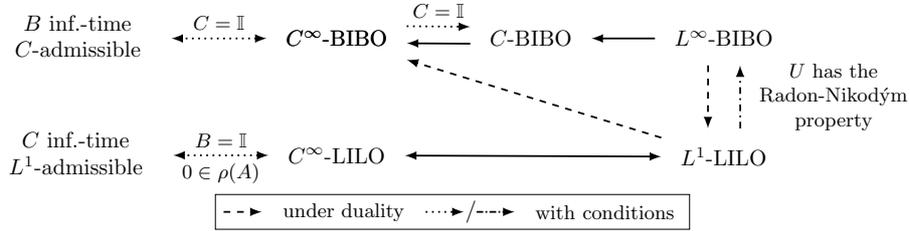
\begin{figure}[h]
\centering
\resizebox{\textwidth}{!}{
\begin{tikzpicture}[stdnode/.style={inner xsep = 3mm,inner ysep=3mm},boxednode/.style={rectangle,thick,draw,inner xsep = 3mm,inner ysep=3mm},roundnode/.style={circle,thick,draw,inner xsep = 0.3mm,inner ysep=0.3mm}]
\node[stdnode] (CinfBIBO) {$C^\infty$-BIBO};
\node[stdnode] (CBIBO) [right= of CinfBIBO] {$C$-BIBO};
\node[stdnode] (LinfBIBO) [right= of CBIBO] {$L^\infty$-BIBO};
\node[stdnode] (CinfLILO) [below= of CinfBIBO] {$C^\infty$-LILO};
\node[stdnode] (LoneLILO) [below= of LinfBIBO] {$L^1$-LILO};
\node[stdnode] (CinfBIBO) {$C^\infty$-BIBO};
\node[stdnode] (BinfTimeCadm) [left=1.5cm of CinfBIBO] {\begin{tabular}{c} $B$ inf.-time \\ $C$-admissible \end{tabular}};
\node[stdnode] (CinfTimeLoneAdm) [left=1.5cm of CinfLILO] {\begin{tabular}{c} $C$ inf.-time \\ $L^1$-admissible \end{tabular}};
\draw[-latex,thick] (LinfBIBO) to (CBIBO);
\draw[-latex,thick] (CBIBO.185) to (CinfBIBO.356);
\draw[-latex,thick,dashed] (LinfBIBO.240) to (LoneLILO.119);
\draw[-latex,thick,dash dot] (LoneLILO.60) to node[right]{\small \begin{tabular}{c} $U$ has the \\ Radon-Nikod\'{y}m \\  property \end{tabular}}(LinfBIBO.301);
\draw[-latex,thick,dashed] (LoneLILO) to (CinfBIBO);
\draw[latex-latex,thick,dotted] (CinfBIBO) -- (BinfTimeCadm) node[midway,above] {\small $C = \mathbb{I}$};
\draw[-latex,thick,dotted] (CinfBIBO.10) -- (CBIBO.169) node[midway,above] {\small $C = \mathbb{I}$};
\draw[latex-latex,thick,dotted] (CinfLILO) -- (CinfTimeLoneAdm) node[midway,above] {\small $B = \mathbb{I}$} node[midway,below] {\small $0 \in \rho(A)$};
\draw[latex-latex,thick] (CinfLILO) to (LoneLILO) ;
\matrix [draw,thin,above right,cells={nodes={anchor=west}}, inner ysep=0.5mm, column sep=0.2cm,anchor=center,ampersand replacement=\&] at ([yshift=-5pt] current bounding box.south) {
    \draw[-latex,thick,dashed](0,0) -- ++ (0.6,0); \& \node{\small under duality}; \&
    \draw[-latex,thick,dotted](0,0) -- ++ (0.6,0); \node at (0.5,0) {/} ; \draw[-latex,thick,densely dash dot](0.8,0) -- ++ (0.6,0) ; \& \node{\small with conditions};\\
  };
\end{tikzpicture}
}
\caption{Relationship between different BIBO and LILO stability notions 
}
\label{fig:BIBOLILIOrelations}
\end{figure}

From a practical point of view, this equivalence of BIBO and LILO stability in the SISO case does --- on a first glance --- not seem to allow significant simplifications of questions related to BIBO stabiliy, as both properties are reduced to the same characterisation via the inverse Laplace transform.

However, working with $L^1$-spaces instead of $L^\infty$ can anyhow be advantageous. Even though $L^1$ is still also only a non-reflexive Banach space --- with all the problems arising therefrom --- other of its properties make it a more suitable space to work with in systems theory compared to $L^\infty$. An example for this is the strong continuity of the various shift semigroups, which holds on $L^1$ but fails in general on $L^\infty$, and which appears e.g.\ in the study of well-posedness of system nodes or realisation theory.

The results presented here also provide further indication that the case of infinite-dimensional input and/or output spaces indeed seems to be considerably more involved and complex compared to the SISO case. The observation that in the latter BIBO and LILO stability are equivalent follows directly from the existence of a measure of bounded total variation encoding the input-output behaviour via a convolution operator. But as Remark~\ref{rem:BIBOLILOnonEquiv} shows that this result cannot hold in general, also a corresponding input-output representation using operator-value measures seems to be excluded in the non-SISO case.



\begin{thebibliography}{10}
\expandafter\ifx\csname url\endcsname\relax
  \def\url#1{\texttt{#1}}\fi
\expandafter\ifx\csname urlprefix\endcsname\relax\def\urlprefix{URL }\fi
\expandafter\ifx\csname href\endcsname\relax
  \def\href#1#2{#2} \def\path#1{#1}\fi

\bibitem{b_Kailath80_LinearSystems}
T.~Kailath, Linear Systems, Information and System Sciences Series,
  Prentice-Hall, Englewood Cliffs, N.J., USA, 1980.

\bibitem{b_Vidyasagar93_NonlinearSystemsAnalysis}
M.~Vidyasagar, Nonlinear Systems Analysis, Prentice-Hall, Inc., Englewood
  Cliffs, N.J., USA, 1993.
\newblock \href {https://doi.org/10.1137/1.9780898719185}
  {\path{doi:10.1137/1.9780898719185}}.

\bibitem{b_PoldermanWillems98_SystemsTheory}
J.~Willems, J.~Polderman, Introduction to Mathematical Systems Theory: A
  Behavioral Approach, Texts in Applied Mathematics, Springer New York, New
  York, 1998.
\newblock \href {https://doi.org/10.1007/978-1-4757-2953-5}
  {\path{doi:10.1007/978-1-4757-2953-5}}.

\bibitem{b_CurtainZwart2020}
R.~Curtain, H.~Zwart, Introduction to Infinite-Dimensional Systems Theory: A
  State-Space Approach, Springer New York, New York, NY, 2020.
\newblock \href {https://doi.org/10.1007/978-1-0716-0590-5}
  {\path{doi:10.1007/978-1-0716-0590-5}}.

\bibitem{b_TucsnakWeiss2009}
M.~Tucsnak, G.~Weiss, Observation and Control for Operator Semigroups,
  Birkh{\"a}user Basel, Basel, 2009.
\newblock \href {https://doi.org/10.1007/978-3-7643-8994-9}
  {\path{doi:10.1007/978-3-7643-8994-9}}.

\bibitem{b_Staffans2005}
O.~Staffans, Well-Posed Linear Systems, Encyclopedia of Mathematics and its
  Applications, Cambridge University Press, Cambridge, 2005.
\newblock \href {https://doi.org/10.1017/CBO9780511543197}
  {\path{doi:10.1017/CBO9780511543197}}.

\bibitem{a_BergerIlchmannRyanFunnel2021}
T.~Berger, A.~Ilchmann, E.~Ryan, Funnel control of nonlinear systems, Math.
  Control Signals Syst 33 (2021) 151--194.
\newblock \href {https://doi.org/10.1007/s00498-021-00277-z}
  {\path{doi:10.1007/s00498-021-00277-z}}.

\bibitem{b_KarafyllisKristic18_ISSforPDEs}
I.~Karafyllis, M.~Krstic, Input-to-State Stability for PDEs, Springer
  International Publishing, Cham, 2019.
\newblock \href {https://doi.org/10.1007/978-3-319-91011-6}
  {\path{doi:10.1007/978-3-319-91011-6}}.

\bibitem{a_MironchenkoPrieur20ISSSurvey}
A.~Mironchenko, C.~Prieur, Input-to-state stability of infinite-dimensional
  systems: Recent results and open questions, SIAM Review 62~(3) (2020)
  529--614.
\newblock \href {https://doi.org/10.1137/19M1291248}
  {\path{doi:10.1137/19M1291248}}.

\bibitem{a_BergerPucheSchwenningerFunnel2020}
T.~Berger, M.~Puche, F.~L. Schwenninger, Funnel control in the presence of
  infinite-dimensional internal dynamics, Systems \& Control Letters 139 (2020)
  104678.
\newblock \href {https://doi.org/10.1016/j.sysconle.2020.104678}
  {\path{doi:10.1016/j.sysconle.2020.104678}}.

\bibitem{a_SchwenningerWierzbaZwart24BIBO}
F.~L. Schwenninger, A.~A. Wierzba, H.~Zwart, On {BIBO} stability of
  infinite-dimensional linear state-space systems, SIAM Journal on Control and
  Optimization 62~(1) (2024) 22--41.
\newblock \href {https://doi.org/10.1137/23M1563098}
  {\path{doi:10.1137/23M1563098}}.

\bibitem{b_GripenbergLondenStaffans1990}
G.~Gripenberg, S.-O. Londen, O.~Staffans, Volterra integral and functional
  equations, no.~34, Cambridge University Press, Cambridge, 1990.
\newblock \href {https://doi.org/10.1017/CBO9780511662805}
  {\path{doi:10.1017/CBO9780511662805}}.

\bibitem{a_TucsnakWeissWellPosed2014}
M.~Tucsnak, G.~Weiss, Well-posed systemsâ€”the {LTI} case and beyond,
  Automatica 50~(7) (2014) 1757--1779.
\newblock \href
  {https://doi.org/https://doi.org/10.1016/j.automatica.2014.04.016}
  {\path{doi:https://doi.org/10.1016/j.automatica.2014.04.016}}.

\bibitem{a_WeissObservationOperators1989}
G.~Weiss, Admissible observation operators for linear semigroups, Israel
  Journal of Mathematics 65 (1989) 17--43.
\newblock \href {https://doi.org/10.1007/BF02788172}
  {\path{doi:10.1007/BF02788172}}.

\bibitem{b_SteinWeiss2016}
E.~M. Stein, G.~Weiss, Introduction to Fourier Analysis on Euclidean Spaces
  (PMS-32), Princeton University Press, Princeton, New Jersey, 1971.
\newblock \href {https://doi.org/doi:10.1515/9781400883899}
  {\path{doi:doi:10.1515/9781400883899}}.

\bibitem{a_UnserNoteOnBiboStability}
M.~Unser, A note on {BIBO} stability, IEEE Transactions on Signal Processing 68
  (2020) 5904--5913.
\newblock \href {https://doi.org/10.1109/TSP.2020.3025029}
  {\path{doi:10.1109/TSP.2020.3025029}}.

\bibitem{a_JacobSchwenningerWintermayr2022}
B.~Jacob, F.~L. Schwenninger, J.~Wintermayr, A refinement of {B}aillon's
  theorem on maximal regularity, Studia Mathematica 263~(2) (2022) 141--158.
\newblock \href {https://doi.org/10.4064/sm200731-20-3}
  {\path{doi:10.4064/sm200731-20-3}}.

\bibitem{a_KaltonPortal08MaximalRegularity}
N.~J. Kalton, P.~Portal, Remarks on $\ell_1$ and $\ell_\infty$-maximal
  regularity for power-bounded operators, Journal of the Australian
  Mathematical Society 84~(3) (2008) 345â€“365.
\newblock \href {https://doi.org/10.1017/S1446788708000712}
  {\path{doi:10.1017/S1446788708000712}}.

\bibitem{b_vanNeerven06_AdjointSemigroup}
J.~van Neerven, The Adjoint of a Semigroup of Linear Operators, Lecture Notes
  in Mathematics, Springer, Berlin Heidelberg, 2006.

\bibitem{b_HillePhillips57_FunctionalAnalysis}
E.~Hille, R.~Phillips, Functional Analysis and Semi-groups, no.~31 in American
  Mathematical Society Colloquium Publications, American Mathematical Society,
  Providence, Rhode Island, 1957.

\bibitem{b_AnalysisInBanachSpaces1}
T.~Hyt\"onen, J.~v. Neerven, M.~Veraar, L.~Weis, Analysis in Banach spaces.
  Volume I, Martingales and Littlewood-Paley theory, Springer, Cham, 2016.
\newblock \href {https://doi.org/10.1007/978-3-319-48520-1}
  {\path{doi:10.1007/978-3-319-48520-1}}.

\end{thebibliography}
\end{document}